\newcommand\tabll[1]{{\tableau[pY]{#1}}}
\def\oPi{\mathring{\Pi}}
\def\sm{{\rm sm}}
\def\C{{\mathbb C}}
\def\R{{\mathbb R}}
\def\Gr{{\rm Gr}}
\def\tF{{\tilde F}}
\def\codim{{\rm codim}}
\def\Bound{{\rm Bound}}
\def\id{{\rm id}}
\def\oMat{\mathring{\Mat}}
\def\Mat{{\rm Mat}}
\def\inv{{\rm inv}}
\def\Z{{\mathbb Z}}
\def\pt{{\rm pt}}
\newcommand\defn[1]{{\bf #1}}
\newtheorem{theorem}{Theorem}
\newtheorem{remark}[theorem]{Remark}
\newtheorem{definition}[theorem]{Definition}
\newtheorem{lemma}[theorem]{Lemma}
\newtheorem{proposition}[theorem]{Proposition}
\newtheorem{corollary}[theorem]{Corollary}
\newtheorem{problem}[theorem]{Problem}
\newtheorem{example}[theorem]{Example}
\numberwithin{theorem}{section}
\begin{document}
\title{Amplituhedron cells and Stanley symmetric functions}
\author{Thomas Lam}
\address{Department of Mathematics, University of Michigan,
2074 East Hall, 530 Church Street, Ann Arbor, MI 48109-1043, USA}
\email{tfylam@umich.edu}\thanks{T.L. was supported by NSF grant DMS-1160726.}
\begin{abstract}
The amplituhedron was recently introduced in the study of scattering amplitudes in $N=4$ super Yang-Mills.  We compute the cohomology class of a tree amplituhedron subvariety of the Grassmannian to be the truncation of an affine Stanley symmetric function.
\end{abstract}

\maketitle
\section{Introduction} \label{sec:intro}
Let $\Gr(k,n)$ denote the Grassmannian of $k$-planes in $\C^n$.  It has a stratification by {\it positroid varieties} $\Pi_f$ \cite{Pos,KLS}, where $f$ ranges over the finite set $\Bound(k,n)$ of $(k,n)$-bounded affine permutations (defined in Section \ref{sec:main}).  Each positroid variety is the intersection of $n$ cyclically rotated Schubert varieties.  In \cite{KLS}, Knutson, Lam, and Speyer identified the cohomology class of a positroid variety with the {\it affine Stanley symmetric function} $\tF_f$ \cite{Lam}.

The totally nonnegative part $\Gr(k,n)_{\geq 0}$ of the real Grassmannian is the locus where all Pl\"ucker coordinates take nonnegative values \cite{Lus,Pos}, and was studied extensively by Postnikov.  Arkani-Hamed and Trnka \cite{AT}, motivated by the study of scattering amplitudes in $N=4$ super Yang-Mills, proposed that $\Gr(k,n)_{\geq 0}$ should be considered a Grassmannian-analogue of a simplex.  Arbitrary convex polytopes are images of simplices under affine or linear maps, and Arkani-Hamed and Trnka proposed to study the {\it amplituhedron}\footnote{In this paper we shall only consider the ``tree" amplituhedron, leaving the ``loop" amplituhedron for later work.}: the image of the totally nonnegative Grassmannian induced by a linear map $Z: \R^n \to \R^{k+m}$ (which in turn gives a rational map $Z_\Gr:\Gr(k,n) \to \Gr(k,k+m)$).   In addition, physical considerations suggested the study of triangulations of the amplituhedron, obtained as unions of  images of the positroid cells $(\Pi_f)_{\geq 0}:=\Pi_f \cap \Gr(k,n)_{\geq 0}$, again under the map $Z_\Gr$.  Specifically,  the {\it scattering amplitude} can be obtained by summing differential forms over cells of a triangulation of the amplituhedron.

The behavior of positroid cells under the map $Z_\Gr$ exhibit a number of features not present in usual convex geometry, including:
\begin{enumerate}
\item
Even when $Z$ is generic, the image $Z_\Gr((\Pi_f)_{\geq 0})$ may not have the expected dimension.  For example, even if $\dim((\Pi_f)_{\geq 0}) = \dim(\Gr(k,k+m))$ we may have $\dim(Z_\Gr((\Pi_f)_{\geq 0})) < \dim(\Gr(k,k+m))$ for generic $Z$.
\item
The map $Z_\Gr|_{((\Pi_f)_{\geq 0})}: ((\Pi_f)_{\geq 0}) \to Z_\Gr((\Pi_f)_{\geq 0})$ can be dimension-preserving, but have degree $d$ greater than one.
\end{enumerate}

In this paper, we study the complex geometry of the behavior of the stratification $\Gr(k,n) = \bigcup_f \Pi_f$ under the map $Z_\Gr$, from a Schubert calculus perspective.  Let $Y_f$ denote the closure of the image of $\Pi_f$ under $Z_\Gr$.  We call $Y_f$ an {\it amplituhedron variety} in the case that it has the same dimension as $\Pi_f$.  

Recall that the cohomology\footnote{Henceforth, we shall always take cohomologies with $\Z$-coefficients.} ring $H^*(\Gr(k,n),\Z)$ can be identified with a quotient of the ring of symmetric functions, and that the basis of Schubert classes correspond to the Schur functions $s_\lambda$, labeled by partitions $\lambda \subseteq (n-k)^k$ that fit inside a $k \times (n-k)$ rectangle.  Let $\ell = n-k-m$.  For $\mu \subseteq (m)^k$ we let $\mu^{+\ell} \subseteq (n-k)^k$ be the partition obtained from $\mu$ by adding $\ell$ columns of height $k$ to the left of $\mu$.  For example, with $\ell = 2$ and $k = 4$, we may have
$$
\mu = \tabll{&&&\\&&\\ &\bl \\ & \bl} \qquad \qquad \qquad
\mu^+ = \tabll{&&&&&\\&&&&\\&&\\&&} 
$$

Given $f = \sum_{\lambda \subset (n-k)^k} c_\lambda s_\lambda$ representing a cohomology class in $H^*(\Gr(k,n))$, we define the {\it truncation} $\tau_{k+m}(f) \in H^*(\Gr(k,k+m))$ by
$$
\tau_{k+m}(f) = \sum_{\mu \subseteq (m)^k} c_{\mu^{+\ell}} s_\mu.
$$
Let $d_f$ denote the degree of the map $Z_\Gr|_{\Pi_f}: \Pi_f \to Y_f$.

\begin{theorem}
The cohomology class of the amplituhedron variety $Y_f$ is equal to $\frac{1}{d_f} \tau_{k+m}(\tF_f)$.
\end{theorem}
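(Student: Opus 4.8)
The plan is to resolve the rational map $Z_\Gr$ by its graph, identify the graph with a Grassmann bundle over $\Gr(k,k+m)$, and reduce the theorem to a Gysin pushforward computation on that bundle.

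Assume $Z\colon\C^n\to\C^{k+m}$ is generic, so that $W:=\ker Z\subset\C^n$ is a generic subspace of dimension $\ell=n-k-m$; write $\pi\colon\C^n\to\C^n/W$, so $Z_\Gr$ is the rational map $V\mapsto\pi(V)=(V+W)/W$, regular exactly on the dense open locus $\Omega=\{V\in\Gr(k,n):V\cap W=0\}$. Identifying $\Gr(k,\C^n/W)$ with $\Gr(k,k+m)$ and writing $S$ for its tautological rank-$k$ subbundle, let $\mathcal{E}$ be the vector bundle of rank $k+\ell$ whose fibre over $U$ is $\pi^{-1}(U)$; it sits in an exact sequence $0\to\underline{W}\to\mathcal{E}\to S\to 0$, so $c(\mathcal{E})=c(S)$. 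The Grassmann bundle $\Gr(k,\mathcal{E})$ is then the closure $\Gamma$ of the graph of $Z_\Gr$ inside $\Gr(k,n)\times\Gr(k,k+m)$: it is irreducible, the structure map $q\colon\Gr(k,\mathcal{E})\to\Gr(k,k+m)$ restricts to $Z_\Gr$ over $\Omega$, and the map $p\colon\Gr(k,\mathcal{E})\to\Gr(k,n)$ sending a subspace $V\subset\mathcal{E}_U$ to $V$ is an isomorphism over $\Omega$ (there $U=\pi(V)$ is forced), hence birational.

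First I would establish $d_f\,[Y_f]=q_*p^*[\Pi_f]$ in $H^*(\Gr(k,k+m))$. Let $\widetilde{\Pi}_f:=\overline{p^{-1}(\Pi_f\cap\Omega)}$ be the proper transform, irreducible of dimension $\dim\Pi_f$. Because $p$ is birational and restricts to an isomorphism over $\Omega$, the classes $p^*[\Pi_f]$ and $[\widetilde{\Pi}_f]$ agree on $\Gr(k,\mathcal{E})$ away from the exceptional locus of $p$, hence differ by a class supported there; using that for generic $W$ the positroid variety $\Pi_f$ meets each stratum $\{V:\dim(V\cap W)\ge j\}$ dimensionally transversally (Kleiman's theorem), a dimension count shows this difference is carried by a subvariety of dimension less than $\dim\Pi_f$, and hence is annihilated by $q_*$. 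Finally, $q$ restricted to $\widetilde{\Pi}_f$ coincides with $Z_\Gr|_{\Pi_f}$ over $\Omega$; since $Y_f$ is an amplituhedron variety this restriction is generically finite of degree $d_f$, so $q_*[\widetilde{\Pi}_f]=d_f[Y_f]$ and the claimed identity follows.

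Second, the computational heart, I would show that $q_*p^*=\tau_{k+m}$ on $H^*(\Gr(k,n))$. In the convention where the Schubert class $s_\lambda$ on a Grassmannian is the Schur polynomial in the Chern roots of the dual tautological subbundle, $p^*s_\lambda=s_\lambda(\mathcal{V}^\vee)$, where $\mathcal{V}\subset q^*\mathcal{E}$ is the tautological rank-$k$ subbundle. The classical Gysin formula for the Grassmann bundle $q\colon\Gr(k,\mathcal{E})\to\Gr(k,k+m)$, with $\mathcal{E}$ of rank $k+\ell$, gives $q_*\bigl(s_\lambda(\mathcal{V}^\vee)\bigr)=s_{\lambda-(\ell^k)}(\mathcal{E}^\vee)$, which vanishes unless $\lambda_k\ge\ell$, in which case $\mu:=\lambda-(\ell^k)$ is the partition obtained by deleting the first $\ell$ columns of $\lambda$; one proves this by factoring $q$ through a tower of projective bundles, or by observing that modulo $H^{>0}(\Gr(k,k+m))$ it reduces to $\int_{\Gr(k,k+\ell)}s_\lambda=\delta_{\lambda,(\ell^k)}$. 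Since $c(\mathcal{E}^\vee)=c(S^\vee)$, Jacobi--Trudi gives $s_\nu(\mathcal{E}^\vee)=s_\nu(S^\vee)$, which is the Schubert class $s_\nu$ for $\nu\subseteq(m)^k$ and $0$ otherwise; and for $\lambda\subseteq(n-k)^k$ the condition $\lambda_k\ge\ell$ is exactly the condition that $\lambda=\mu^{+\ell}$ with $\mu=\lambda-(\ell^k)\subseteq(m)^k$. Hence $q_*p^*s_\lambda$ equals $s_\mu$ when $\lambda=\mu^{+\ell}$ and $0$ otherwise, i.e.\ $q_*p^*=\tau_{k+m}$. Writing $[\Pi_f]$ as the image of $\tF_f$ in $H^*(\Gr(k,n))$ (the identity of \cite{KLS}), the two steps combine to give $d_f\,[Y_f]=q_*p^*[\Pi_f]=\tau_{k+m}(\tF_f)$.

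The step I expect to be the main obstacle is the first one: the birational bookkeeping that converts $p^*[\Pi_f]$ into $d_f[Y_f]$ after pushforward. This is where genericity of $Z$ must really be used, both to control how $\Pi_f$ meets the indeterminacy locus $\{V:V\cap W\ne0\}$ and its deeper strata, and to guarantee that the exceptional contributions to $p^*[\Pi_f]$ die under $q_*$. The Gysin formula in the second step is standard, but some care is needed with the duality convention relating Schur functions to Schubert classes, so that the shift $\lambda\mapsto\lambda-(\ell^k)$ matches the definition of $\mu^{+\ell}$ (adjoining $\ell$ columns of height $k$) rather than its transpose.
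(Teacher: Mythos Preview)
Your approach is correct and genuinely different from the paper's.  The paper never resolves the rational map; instead it fixes a Schubert variety $Y_I\subset\Gr(k,k+m)$ of complementary dimension, shows (Lemma~\ref{lem:inverse}) that $\overline{Z_\Gr^{-1}(Y_I)}$ is again a Schubert variety $X_I^Z$ in $\Gr(k,n)$, and then uses a Kleiman transversality argument (Lemma~\ref{lem:main}) to arrange that, for generic $Z$, the finite sets $\Pi_f\cap X_I^Z$ and $Y_f\cap Y_I$ are both transverse with $\#(\Pi_f\cap X_I^Z)=d_f\cdot\#(Y_f\cap Y_I)$.  The Schubert duality pairing \eqref{eq:dual} then converts these intersection numbers into the identity $[Y_f]=\frac{1}{d_f}\tau_{k+m}(\tF_f)$.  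In effect the paper computes $p_*q^*$ on Schubert classes by hand via Lemma~\ref{lem:inverse}, which is Poincar\'e--dual to your computation of $q_*p^*$ via the Grassmann-bundle Gysin formula.  Your route is more structural and packages the combinatorics into a single pushforward identity; the paper's route is more elementary, needing only Kleiman and the intersection pairing, and it yields the vanishing statement $\tau_{k+m}(\tF_f)=0\Leftrightarrow\dim Y_f<\dim\Pi_f$ in the same breath.

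One refinement to your Step~1: the conclusion of your dimension count is stronger than you state.  The refined pullback $p^*[\Pi_f]$ lives naturally in $A_{\dim\Pi_f}(p^{-1}(\Pi_f))$, agrees with $[\widetilde{\Pi}_f]$ after restriction to $p^{-1}(\Pi_f\cap\Omega)$, and hence by the localization sequence the difference is pushed forward from $A_{\dim\Pi_f}(p^{-1}(\Pi_f\cap E_Z))$.  Your computation that the stratum $\{\dim(V\cap W)=j\}$ contributes fibres of dimension $jm$ over a locus of codimension $j(m+j)$ in $\Pi_f$ gives $\dim p^{-1}(\Pi_f\cap E_Z)\le\dim\Pi_f-1$, so that Chow group is zero and $p^*[\Pi_f]=[\widetilde{\Pi}_f]$ on the nose --- there is no residual class for $q_*$ to kill.  (Also worth a sentence: the identification $\Gamma=\Gr(k,\mathcal{E})$ requires checking irreducibility of the incidence locus $\{(V,U):V\subset\pi^{-1}(U)\}$; this is easy but not quite immediate.)
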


We also prove that $\tau_{k+m}(\tF_f) = 0$ if and only if $\dim Y_f < \dim \Pi_f$.  As a corollary, we deduce a criterion for $Z_\Gr((\Pi_f)_{\geq 0})$ to have the same dimension as $(\Pi_f)_{\geq 0}$, which corresponds to the physical notion of ``kinematical support".  We also obtain some estimates on the degree $d_f$.  We present a number of possible further directions in Section \ref{sec:other}.
% including a reality conjecture reminiscent of the Shapiro-Shapiro conjecture.

\medskip

{\bf Acknowledgments.}  My understanding of the amplituhedron owes a lot to Nima Arkani-Hamed and Jara Trnka, who have taught me much through many conversations.   I thank Jake Bourjaily for discussions related to scattering amplitudes, and for his ``positroids'' package in Mathematica.  I benefited from discussions with Allen Knutson and Alex Postnikov, and am especially grateful to David Speyer for a number of helpful comments.

Much of what I understand about scattering amplitudes was learnt during a reading seminar with Henriette Elvang,  Yu-tin Huang, Cindy Keeler, Tim Olson, Sam Roland, and David Speyer.  I thank them all for teaching me this subject.

\section{Projection maps and Schubert varieties}
We fix positive integers $n,k,m$ satisfying $n > k+m$, and we let $\ell = n-k-m$.  Let $\Mat(n,k+m)$ denote the space of $n \times (k+m)$ matrices, and let $\oMat(n,k+m)$ denote the open subset of full rank $n \times (k+m)$ matrices.  We think of $Z \in \oMat(n,k+m)$ as a linear map $Z: \C^n \to \C^{k+m}$.  The map $Z$ induces a rational map $Z_\Gr: \Gr(k,n) \to \Gr(k,k+m)$ given by $X \mapsto X\cdot Z$.  The {\it exceptional locus} $E_Z$ of $Z_\Gr$ is the subset of $\Gr(k,n)$ where the map $Z_\Gr$ is not defined:
$$
E_Z = \{X \in \Gr(k,n)\mid X \cap \ker(Z) \neq (0)\}.
$$
Here $\ker(Z) \subset \C^n$ is the usual kernel of a linear map.  The exceptional locus $E_Z$ is in fact a Schubert variety which has codimension $m+1$.

\begin{lemma}\label{lem:bundle} The morphism $Z_\Gr :\Gr(k,n) \setminus E_Z \to \Gr(k,k+m)$ is a fiber bundle with fiber $\C^{k(n-k-m)}$.  
\end{lemma}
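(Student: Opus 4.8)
The plan is to realize $Z_\Gr$, restricted to the open set $\Gr(k,n)\setminus E_Z$ on which it is a morphism, as an explicit Zariski-locally trivial bundle over $\Gr(k,k+m)$ with affine fibers of the advertised dimension. First I would pin down the fibers set-theoretically. Since $Z$ has full rank, $K:=\ker(Z)$ has dimension $\ell=n-k-m$, and for each $W\in\Gr(k,k+m)$ the preimage $Z^{-1}(W)\subseteq\C^n$ is a subspace of dimension $k+\ell$ containing $K$, with $Z$ inducing an isomorphism $Z^{-1}(W)/K\cong W$. If $X\in\Gr(k,n)\setminus E_Z$ then $Z|_X$ is injective, so $Z_\Gr(X)=W$ forces $X\subseteq Z^{-1}(W)$ with $\dim X=k$; conversely, any $k$-dimensional $X\subseteq Z^{-1}(W)$ with $X\cap K=(0)$ maps isomorphically onto $W$. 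Thus $Z_\Gr^{-1}(W)$ is exactly the set of complements to $K$ inside $Z^{-1}(W)$, which, once one complement is fixed, is identified with $\mathrm{Hom}(Z^{-1}(W)/K,\,K)\cong\mathrm{Hom}(\C^k,\C^\ell)$; this has dimension $k\ell=k(n-k-m)$, as required.

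For local triviality I would argue by charts. Fix once and for all a linear complement $C\subseteq\C^n$ to $K$; since $Z$ has full rank, $Z|_C\colon C\to\C^{k+m}$ is an isomorphism, and via it we write $\C^n=K\oplus\C^{k+m}$ with $Z$ the projection onto the second summand. For a $k$-element subset $I\subseteq\{1,\dots,k+m\}$ let $U_I\subseteq\Gr(k,k+m)$ be the standard affine chart of those $W$ projecting isomorphically onto the coordinate subspace $\C^I$; every such $W$ is the graph $W_A=\{\,w+Aw:w\in\C^I\,\}$ of a unique $A\in\mathrm{Hom}(\C^I,\C^{I^c})$, so $U_I\cong\C^{km}$. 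Over $U_I$ one has $Z^{-1}(W_A)=K\oplus W_A$, and a $k$-dimensional $X\subseteq K\oplus W_A$ with $X\cap K=(0)$ is precisely the graph $X_{A,\varphi}=\{\,\varphi(w)+w+Aw:w\in\C^I\,\}$ of a unique $\varphi\in\mathrm{Hom}(\C^I,K)\cong\C^{k\ell}$. The correspondence $X\leftrightarrow(A,\varphi)$ is a morphism in both directions and commutes with the projections to $U_I$, so $Z_\Gr^{-1}(U_I)\cong U_I\times\C^{k\ell}$ over $U_I$. As $I$ ranges over all $k$-subsets these charts cover $\Gr(k,k+m)$, which finishes the proof.

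The same argument admits a global packaging: $Z$ induces a surjection of trivial bundles over $\Gr(k,k+m)$, the preimage of the tautological rank-$k$ subbundle $S$ is a rank-$(k+\ell)$ subbundle $\mathcal E\subseteq\underline{\C^n}$ fitting in an exact sequence $0\to\underline K\to\mathcal E\to S\to 0$, and $\Gr(k,n)\setminus E_Z$ is identified with the open subvariety of the Grassmann bundle $\Gr(k,\mathcal E)\to\Gr(k,k+m)$ cut out by $X\cap\underline K=(0)$ --- that is, with the bundle of splittings of this sequence, a torsor under $\mathcal{H}om(S,\underline K)$, hence Zariski-locally trivial with fiber $\C^{k\ell}$ (the sequence splits over any affine open by vanishing of $\mathrm{Ext}^1$). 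The only genuine point to watch, and the one I would spend care on, is this local-triviality step: the set-theoretic fiber computation is immediate, so the real content is verifying that the family of complements to $K$ varies algebraically and that the resulting trivializations are honest isomorphisms of varieties --- which the explicit chart computation above settles without difficulty.
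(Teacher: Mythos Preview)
Your proof is correct and follows essentially the same approach as the paper's. The paper simply uses the $GL(n)$-action to reduce to the case where $Z$ is projection onto the first $k+m$ coordinates (equivalent to your choice of a complement $C$ to $K$), then observes that in matrix form the map is $(Y\mid *)\mapsto Y$ with the $k\times\ell$ block $*$ giving the affine fiber; you have unwound this into explicit chart-by-chart trivializations and added a global torsor description, but the underlying content is identical.
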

\begin{proof}
We use the $GL(n)$ actions on $Z$ and on $\Gr(k,n)$ to reduce to the case that $Z$ is the orthogonal projection of ${\rm span}(e_1,e_2,\ldots,e_n)$ onto ${\rm span}(e_1,e_2,\ldots,e_{k+m})$.  Then the map $Z_\Gr$ looks like
$$
(Y|*) \mapsto Y
$$
where $Y$ is a $k \times (k+m)$ matrix representing a point in $\Gr(k,k+m)$, and $*$ denotes the $\C^{k(n-k-m)}$ fiber.
\end{proof}

We use the notation $[a]:=\{1,2,\ldots,a\}$.
Let $I \in \binom{[n]}{k}$ be a $k$-element subset of $[n]$.  Let $F_\bullet = \{ 0 = F_0 \subset F_1 \subset \cdots F_{n-1} \subset F_n = \C^n\}$ be a flag in $\C^n$, so that $\dim F_i = i$.  The Schubert variety $X_I(F_\bullet)$ is given by
\begin{equation}\label{eq:schub}
X_I(F_\bullet) = \{X \in \Gr(k,n) \mid \dim(X \cap F_j) \geq \#(I \cap [n-j+1,n]) 
\text{ for all } j \in [n]\}.
\end{equation}
Thus $X_{[k]}(F_\bullet) = \Gr(k,n)$ and $\codim(X_I(F_\bullet)) = i_1+i_2+\cdots+i_k - (1+2+\cdots+k)$, where $I = \{i_1,i_2\ldots,i_k\}$.  Here and elsewhere, we always mean complex (co)dimension when referring to complex subvarieties.

Let $G_\bullet$ be a flag in $\C^{k+m}$.  Then $Z^{-1}(G_\bullet)$ is the partial flag
$$
Z^{-1}(G_\bullet):=\{\ker(Z) = Z^{-1}(G_0) \subset Z^{-1}(G_1) \subset \cdots \subset Z^{-1}(G_{k+m}) = Z^{-1}(\C^{k+m}) = \C^n\}
$$ 
of subspaces with successive dimensions $n-(k+m), n-(k+m)+1,\ldots,n$.  We denote by $Y_J(G_\bullet)$ a Schubert variety in $\Gr(k,k+m)$, where $J$ is a $k$-element subset of $[k+m]$.  A full-flag extension of $Z^{-1}(G_\bullet)$ is simply any flag $F_\bullet$ in $\C^n$ whose $n-(k+m), n-(k+m)+1,\ldots,n$-dimensional pieces give $Z^{-1}(G_\bullet)$. 

\begin{lemma}\label{lem:inverse}
We have
$$
\overline{Z_\Gr^{-1}(Y_I(G_\bullet))} = X_I(F_\bullet)
$$
where $F_\bullet$ is any full-flag extension of $Z^{-1}(G_\bullet)$, and $I \subset [k+m]$ is considered a subset of $[n]$ via the natural inclusion $[k+m] =\{1,2,\ldots,k+m\} \subset \{1,2,\ldots,n\}=[n]$. 
\end{lemma}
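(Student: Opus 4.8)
The plan is to use Lemma~\ref{lem:bundle} to reduce to a concrete coordinate situation, then chase the defining inequalities of Schubert varieties through the preimage. First I would apply the $GL(n)$-reduction from the proof of Lemma~\ref{lem:bundle}: acting by $GL(n)$ on $Z$ and $\Gr(k,n)$ simultaneously, I may assume $Z$ is the orthogonal projection of ${\rm span}(e_1,\ldots,e_n)$ onto ${\rm span}(e_1,\ldots,e_{k+m})$, so that $\ker(Z) = {\rm span}(e_{k+m+1},\ldots,e_n)$ and $Z^{-1}(G_i) = G_i \oplus \ker(Z)$ for a flag $G_\bullet$ in $\C^{k+m}$. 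I also get to choose $G_\bullet$ to be the standard flag, $G_i = {\rm span}(e_{k+m-i+1},\ldots,e_{k+m})$ or whichever convention makes $Y_I(G_\bullet)$ the standard Schubert variety indexed by $I \subset [k+m]$ via \eqref{eq:schub} (with $n$ replaced by $k+m$). Then a natural full-flag extension $F_\bullet$ of $Z^{-1}(G_\bullet)$ is obtained by first filling up with the $\ker(Z)$ directions and then the $G_i$ directions, which makes $F_\bullet$ a \emph{coordinate} flag; I should check that the labels match under the inclusion $[k+m]\subset[n]$, i.e. that $F_j = Z^{-1}(G_{j-\ell})$ for $j \geq \ell = n-k-m$ and $F_j \subset \ker(Z)$ for $j \leq \ell$.

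Next I would prove the inclusion $Z_\Gr^{-1}(Y_I(G_\bullet)) \subseteq X_I(F_\bullet)$ on the locus $\Gr(k,n)\setminus E_Z$ where $Z_\Gr$ is defined. For $X$ with $X \cap \ker(Z) = (0)$, the map $Z$ restricts to an isomorphism $X \xrightarrow{\sim} X\cdot Z$, and for any subspace $G \subset \C^{k+m}$ one has $\dim(X \cap Z^{-1}(G)) = \dim((X\cdot Z) \cap G)$, because $Z^{-1}(G) \cap X = Z|_X^{-1}(G \cap X\cdot Z)$. Feeding this into the Schubert conditions \eqref{eq:schub}: the condition $\dim((X\cdot Z)\cap G_j) \geq \#(I \cap [k+m-j+1, k+m])$ for all $j$ translates, via $F_{j+\ell} = Z^{-1}(G_j)$ and the fact that $\#(I \cap [k+m - j + 1, k+m]) = \#(I \cap [n - (j+\ell) + 1, n])$ (using $I \subset [k+m]$ and $n - (j+\ell) + 1 = k+m-j+1$), into exactly the conditions defining $X_I(F_\bullet)$ for the indices $j' \geq \ell$; for $j' < \ell$ the condition is $\dim(X \cap F_{j'}) \geq \#(I \cap [n-j'+1,n]) = 0$, which is vacuous. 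Hence $Z_\Gr(X) \in Y_I(G_\bullet)$ forces $X \in X_I(F_\bullet)$. Taking closures and noting $E_Z$ is a proper closed subset (codimension $m+1 \geq 1$), we get $\overline{Z_\Gr^{-1}(Y_I(G_\bullet))} \subseteq X_I(F_\bullet)$.

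For the reverse inclusion I would use a dimension count together with irreducibility. The Schubert variety $X_I(F_\bullet)$ is irreducible of codimension $\sum(i_a) - \binom{k+1}{2}$ in $\Gr(k,n)$, and $Y_I(G_\bullet)$ is irreducible of the same codimension in $\Gr(k,k+m)$ (same $I$, same formula). By Lemma~\ref{lem:bundle}, $Z_\Gr: \Gr(k,n)\setminus E_Z \to \Gr(k,k+m)$ is a fiber bundle with fiber $\C^{k\ell}$, so $Z_\Gr^{-1}(Y_I(G_\bullet)) \cap (\Gr(k,n)\setminus E_Z)$ is irreducible of codimension equal to $\codim Y_I(G_\bullet)$, which equals $\codim X_I(F_\bullet)$. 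Since this preimage is contained in the irreducible variety $X_I(F_\bullet)$ of the same dimension, and one checks it is nonempty (e.g. a generic point of $X_I(F_\bullet)$ avoids $E_Z$, which requires noting $X_I(F_\bullet) \not\subseteq E_Z$ — this holds because $E_Z = X_{I_0}(F_\bullet)$ for the subset $I_0$ corresponding to codimension $m+1$, and one checks $X_I \subseteq E_Z$ would force a codimension inequality that fails, or more directly that $X_I(F_\bullet)$ contains points meeting $\ker Z$ trivially), its closure must be all of $X_I(F_\bullet)$. Combining the two inclusions gives the claim.

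The main obstacle I anticipate is the bookkeeping in the second paragraph: matching the indexing set $I \subset [k+m]$ viewed inside $[n]$ against the flag-index shift by $\ell$, and verifying that the ``extra'' conditions (for flag steps $F_{j'}$ with $j' < \ell$, which lie inside $\ker(Z)$) are automatically satisfied rather than imposing new constraints — equivalently, that the full-flag extension can be chosen so these steps contribute nothing. A secondary subtlety is confirming that $X_I(F_\bullet)$ is not contained in the exceptional locus $E_Z$, so that the generic fiber-bundle argument applies; this should follow from comparing $I$ with the subset defining $E_Z$ as a Schubert variety, but it needs to be stated carefully.
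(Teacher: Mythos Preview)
Your proposal is correct and follows essentially the same route as the paper: establish the inclusion $Z_\Gr^{-1}(Y_I(G_\bullet)) \subseteq X_I(F_\bullet)$ by translating the Schubert incidence conditions through $F_{j+\ell}=Z^{-1}(G_j)$ and observing that the conditions for $j'\leq \ell$ are vacuous since $I\subset[k+m]$, then conclude equality by irreducibility and the dimension count coming from Lemma~\ref{lem:bundle}. The paper does this directly for arbitrary $Z$, $G_\bullet$, $F_\bullet$ without your preliminary $GL(n)$ normalization, which is harmless but unneeded; and it does not pause to verify $X_I(F_\bullet)\not\subseteq E_Z$, though your observation that this follows from $\max(I)\leq k+m$ versus the index $k+m+1$ defining $E_Z$ as a Schubert variety is the right way to fill that small gap.
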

\begin{proof}
Suppose $X \in Z_\Gr^{-1}(Y_I(G_\bullet))$.  Then $\dim(Z_\Gr(X) \cap G_j) \geq \#(I \cap [k+m-j+1,k+m])$, and so $\dim(X \cap F_{j+n-k-m}) \geq \#(I \cap [k+m-j+1,n])$ for all $j \in [1,k+m]$.  That is, $\dim(X \cap F_{j'}) \geq \#(I \cap [n-j'+1,n])$ for all $j' \in [n-k-m+1,n]$.  Since membership in $X_I(F_\bullet)$ imposes no condition on $X \cap F_{j'}$ for $j' \in [1,n-k-m]$, we conclude that $ Z_\Gr^{-1}(Y_I(G_\bullet)) \subset X_I(F_\bullet)$.  But using Lemma \ref{lem:bundle}, we see that  $\overline{Z_\Gr^{-1}(Y_I(G_\bullet))}$ and $X_I(F_\bullet)$ are closed irreducible subvarieties of $\Gr(k,n)$ of the same dimension, and so must be identical.
\end{proof}

If $J=\{m+1,m+2,\ldots,k+m\}$ then $Y_J(G_\bullet)$ is a single point $Y = G_k \in \Gr(k,k+m)$.  Lemma \ref{lem:inverse} then says that $\overline{Z_\Gr^{-1}(Y)} = \Gr(k,Z^{-1}(Y))$ is a subGrassmannian of $\Gr(k,n)$.

\section{Cohomology class of a projection}
We shall need the following version of Kleiman transversality.

\begin{theorem}[{\cite[Corollary 4]{Kle}}] \label{thm:Kleiman}
Assume the base field is $\C$.  Let $X$ be an integral algebraic scheme with a transitive action of an algebraic group $G$.  Let $Y, Z \subset X$ be integral subschemes.   Then
\begin{enumerate}
\item 
There exists a dense subset $U \subset G$ such that for $g \in U$, the intersection $gY \cap Z$ is proper, that is, each component has dimension $\dim(Y) + \dim(Z) - \dim(X)$. 
\item
If in addition $Y$ and $Z$ are smooth, then $U$ can be chosen so that for all $g \in U$, the subschemes $gY$ and $Z$ intersect transversally, that is, the intersection $gY \cap Z$ is smooth and each component has dimension $\dim(Y) + \dim(Z) - \dim(X)$. 
\end{enumerate}
\end{theorem}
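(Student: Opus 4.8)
This is the classical generic-transversality theorem of Kleiman, quoted here as a black box, so the plan is simply to reproduce the argument of \cite{Kle}; I will only sketch it. The engine of the proof is the \emph{universal translate}
$$
\widetilde{Y} := \{(g,x) \in G \times X \mid g^{-1}x \in Y\}.
$$
The automorphism $(g,x) \mapsto (g,g^{-1}x)$ of $G \times X$ carries $\widetilde{Y}$ isomorphically onto $G \times Y$, so $\widetilde{Y}$ is integral of dimension $\dim G + \dim Y$. The second projection $q:\widetilde{Y} \to X$ is equivariant for the action $h\cdot(g,x) = (hg,hx)$ on the source and for left translation on the target; since $G$ acts transitively on $X$, $q$ is a fiber bundle (in the fppf, in fact \'etale, topology), its fiber over a point $x_0$ being the preimage of $Y$ under the orbit map $G \to X$, $g \mapsto g^{-1}x_0$. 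As the orbit map is faithfully flat --- and smooth when the characteristic is zero --- the morphism $q$ is flat with fibers of dimension $\dim G + \dim Y - \dim X$.

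Next I would pull back along $Z \hookrightarrow X$, obtaining $\widetilde{Y}_Z := q^{-1}(Z)$, which by flatness of $q$ and integrality of $Z$ has pure dimension $\dim Z + \dim G + \dim Y - \dim X$. For part (1), apply the theorem on fiber dimensions of a dominant morphism to the first projection $r:\widetilde{Y}_Z \to G$, whose scheme-theoretic fiber over $g$ is precisely $gY \cap Z$: either $r$ is dominant, and then over a dense open of $G$ every component of $gY\cap Z$ has dimension $\dim \widetilde{Y}_Z - \dim G = \dim Y + \dim Z - \dim X$, or $r$ is not dominant, in which case $gY\cap Z = \emptyset$ for $g$ outside a proper closed subset and the conclusion holds vacuously. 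For part (2), if $Y$ and $Z$ are smooth, then $\widetilde{Y} \cong G\times Y$ is smooth and $q$ is a smooth morphism, hence $\widetilde{Y}_Z \to Z$ is smooth and $\widetilde{Y}_Z$ is smooth; generic smoothness --- and here the characteristic-zero hypothesis is indispensable --- shows that $r$ is a smooth morphism over a dense open $U\subseteq G$, so for $g\in U$ the fiber $gY\cap Z$ is smooth and equidimensional of the expected dimension, i.e. $gY$ and $Z$ meet transversally.

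The step I expect to be most delicate to write out honestly is the passage from ``generic over the image of $r$'' to ``generic over $G$'', together with the assertion that \emph{every} component of the generic fiber has the expected dimension, rather than merely the generic fiber having it at its generic point. This is handled by checking that $\widetilde{Y}_Z$ is integral, which reduces to the fibers of $q$ over points of $Z$ being integral; that holds when $G$ is connected with connected stabilizers --- as for $G = GL(n)$ acting on the Grassmannian, the only case needed in this paper --- and for disconnected $G$ one argues component by component, which is why the statement is most naturally phrased with a dense subset $U$ rather than a dense open.
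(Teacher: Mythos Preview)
The paper does not prove this statement; it is quoted from \cite[Corollary~4]{Kle} and used as a black box, so there is nothing to compare against. Your sketch correctly reproduces Kleiman's own argument via the universal translate $\widetilde{Y}\cong G\times Y$, flatness of the second projection, and generic smoothness for part~(2).

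One remark on your final paragraph: you do not need $\widetilde{Y}_Z$ to be integral. Flatness of $q$ already forces $\widetilde{Y}_Z$ to have \emph{pure} dimension $\dim G+\dim Y+\dim Z-\dim X$; applying upper semicontinuity of fiber dimension to $r$ on each irreducible component separately (components not dominating $G$ contribute nothing to the generic fiber) yields~(1) over the intersection of finitely many dense opens of $G$, so no hypothesis on stabilizers is required.
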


We remark that if $gY$ and $Z$ intersect transverally then the intersection $gY \cap Z$, being smooth, must be contained in the smooth locus of both $gY$ and $Z$.  We shall also need the following technical result which appears in the proof of Theorem \ref{thm:Kleiman}.

\begin{lemma}[{\cite[Lemma 1]{Kle}}] \label{lem:Kleiman}
Assume the base field is $\C$.  Consider a diagram with integral algebraic schemes:
\begin{center}
\begin{tikzpicture}[description/.style={fill=white,inner sep=2pt}]
\matrix (m) [matrix of math nodes, row sep=3em,
column sep=2.5em, text height=1.5ex, text depth=0.25ex]
{  & W & & Z \\
S& & X & \\ };
%\draw[double,double distance=5pt] (m-1-1) – (m-1-3);
\draw[->,font=\scriptsize]
(m-1-2) edge node[description] {$p$} (m-2-1);
\draw[->,font=\scriptsize]
(m-1-2) edge node[description] {$q$} (m-2-3);
\draw[->,font=\scriptsize]
(m-1-4) edge node[description] {$r$} (m-2-3);
\end{tikzpicture}
\end{center}
\begin{enumerate}
\item
Assume $q$ is flat.  Then, there exists a dense open subset $U$ of $S$ such that for each point $s \in U$, either the fibered product, $p^{-1}(s) \times_X Z$, is empty or it is equidimensional and its dimension is given by the formula,
$$
\dim(p^{-1}(s) \times_X Z) = \dim(p^{-1}(s)) + \dim(Z) - \dim(X).
$$
\item
Assume $q$ is flat with smooth fibers.  Assume $Z$ is smooth.  Then $p^{-1}(s) \times_X Z$ is smooth for each point $s$ in an open dense subset of $S$.
\end{enumerate}
\end{lemma}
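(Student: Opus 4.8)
The plan is to put everything into the fibre square and reduce to \emph{generic flatness} together with the dimension formula for fibres of a flat morphism. Set $W'=W\times_X Z$, with projections $a\colon W'\to W$ and $b\colon W'\to Z$, and put $p'=p\circ a\colon W'\to S$. Since the formation of fibre products commutes with the base change $\Spec\kappa(s)\to S$, the fibre of $p'$ over $s$ is, scheme-theoretically, $(p')^{-1}(s)=p^{-1}(s)\times_X Z$ --- exactly the object appearing in the statement --- so it suffices to understand the generic behaviour of the single morphism $p'$. If $p'$ is not dominant (in particular if $W'=\emptyset$) then $(p')^{-1}(s)=\emptyset$ for every $s$ in the dense open $S\setminus\overline{p'(W')}$, and both assertions hold vacuously; so we may assume $p'$, hence $p$, is dominant and $W'\neq\emptyset$.

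The first substantive step is to compute $\dim W'$. Since $q$ is flat and of finite type between integral schemes it is open, hence dominant, and every non-empty fibre of $q$ is equidimensional of dimension $e:=\dim W-\dim X$. The morphism $b$ is flat, being the base change of $q$ along $r$, and its fibre over $z\in Z$ is the fibre of $q$ over $r(z)$ extended along $\kappa(r(z))\hookrightarrow\kappa(z)$; hence each non-empty fibre of $b$ is equidimensional of dimension $e$, and (using $W'\neq\emptyset$) these fibres are non-empty over a dense open of $Z$, so the generic fibre of $b$ is $e$-dimensional and equidimensional. The key point --- the only place flatness of $q$ is genuinely used --- is that \emph{every} irreducible component of $W'$ dominates $Z$: the going-down property of the flat morphism $b$ forces the generic point of any component of $W'$ to map to the generic point of $Z$. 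Applying the flat fibre-dimension formula at such a point (its local fibre being a component of the generic fibre of $b$) shows every component of $W'$ has dimension $e+\dim Z$; thus $W'$ is equidimensional with $\dim W'=\dim W+\dim Z-\dim X$.

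Next, apply Grothendieck's generic flatness to $p'\colon W'\to S$: there is a dense open $U_1\subseteq S$ over which $p'$ is flat. Over $U_1$ the source is equidimensional of dimension $\dim W'$ (an open subscheme of a Noetherian equidimensional scheme is equidimensional), the base is integral, and --- by the same going-down argument --- every component of $(p')^{-1}(U_1)$ dominates $U_1$, so the generic fibre of $p'$ is equidimensional of dimension $\dim W'-\dim S$. By upper semicontinuity of fibre dimension and constructibility of the locus on which the fibre is equidimensional, there is a dense open $U_2\subseteq U_1$ over which every non-empty fibre $(p')^{-1}(s)$ is equidimensional of dimension $\dim W'-\dim S$. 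Finally, generic flatness for $p$ gives a dense open on which $\dim p^{-1}(s)=\dim W-\dim S$; intersecting with $U_2$ yields
$$
\dim\bigl((p')^{-1}(s)\bigr)=\dim W'-\dim S=(\dim W-\dim S)+\dim Z-\dim X=\dim p^{-1}(s)+\dim Z-\dim X,
$$
which proves (1). For (2), the hypotheses say exactly that $q$ is smooth (flat, of finite type, with smooth fibres, over the field $\C$ of characteristic zero); smoothness is stable under base change, so $b$ is smooth, and since $Z$ is smooth over $\C$ so is $W'$ --- in particular $W'$ is a smooth variety. As the characteristic is zero, generic smoothness applies to the dominant morphism $p'$: over a dense open of $S$ the morphism $p'$ is smooth, hence its fibres $p^{-1}(s)\times_X Z$ are smooth there. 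Intersecting with the open set from (1) gives (2).

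I expect the main obstacle --- and where essentially all the content lies --- to be the middle step: controlling the dimensions of \emph{all} irreducible components of $W'=W\times_X Z$, a scheme that can be reducible and badly behaved for a general base change. Flatness of $q$ is exactly what makes this work, through the going-down property (equivalently, the flat fibre-dimension formula); everything else is formal use of generic flatness, Chevalley's semicontinuity, and, for part (2), generic smoothness, as in Kleiman's original argument \cite{Kle}.
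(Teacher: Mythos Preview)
The paper does not prove Lemma~\ref{lem:Kleiman} at all: it is quoted directly from \cite[Lemma~1]{Kle} and used as a black box, so there is no ``paper's own proof'' to compare your argument against.

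That said, your reconstruction is correct and is essentially Kleiman's original argument. The scheme is exactly as you outline: form $W'=W\times_X Z$ and reduce both parts to statements about the single morphism $p'=p\circ a\colon W'\to S$; use flatness of $q$ (hence of its base change $b\colon W'\to Z$) together with the fibre-dimension formula for flat maps to show $W'$ is equidimensional of dimension $\dim W+\dim Z-\dim X$; then invoke generic flatness (for part~(1)) and generic smoothness in characteristic zero (for part~(2)) on $p'$. A couple of minor remarks: once $p'$ is flat over $U_1$ and $(p')^{-1}(U_1)$ is equidimensional, the equidimensionality and dimension of every fibre over $U_1$ already follow from the flat fibre-dimension formula, so the separate appeal to semicontinuity and constructibility to shrink to $U_2$ is not really needed; and your going-down argument that every component of $W'$ dominates $Z$ is one clean way to see equidimensionality of $W'$, but the same conclusion also drops out directly from the flat fibre-dimension formula applied at closed points. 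Neither point is a gap.
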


%Let $E_Z = \{X \in \Gr(k,n) \mid X \cap \ker(Z) \neq (0)\}$ denote the exceptional locus for the map $Z_\Gr$.  Here $\ker(Z) \subset \C^n$ is the kernel of the map $Z: \C^n \to \C^{k+m}$.   
Let $W \in \Gr(k,n)$ be an irreducible subvariety.  For $Z \in \oMat(n,k+m)$, we define
$$
\tau_Z(W):= \overline{Z_\Gr(W \setminus E_Z)}.
$$
For a generic $Z$, the subscheme $W \setminus E_Z$ is irreducible and dense in $W$.  Thus $\tau_Z(W)$ is itself an irreducible subvariety.  There is a $GL(n)$-action on $\Gr(k,n)$ and a $GL(n)$-action on $\oMat(n,k+m)$.  We choose compatible conventions so that $gE_{Z_0} = E_{gZ_0}$. 

Let $Y_I = Y_I(G_\bullet) \subset \Gr(k,k+m)$ be a Schubert subvariety.  For $Z \in \oMat(n,k+m)$ a full-rank matrix, let $X^Z_I = \overline{Z_\Gr^{-1}(Y_I)} \subset \Gr(k,n)$ be as in Lemma \ref{lem:inverse}.

\begin{lemma}\label{lem:main}
Fix $W \subset \Gr(k,n)$ an irreducible subvariety, $Y_I \subset \Gr(k,k+m)$ a Schubert variety, satisfying $\dim(W) +\dim(Y_I) = km$.  Then there exists a Zariski-open subset $U \subset \oMat(n,k+m)$ such that:
\begin{enumerate}
\item for all $Z \in U$, we have $W \setminus E_Z$ is open and dense in $W$;
\item 
\begin{enumerate}
\item
either for all $Z \in U$  we have $\dim(\tau_Z(W)) < \dim(W)$ and $\tau_Z(W) \cap Y_I = \emptyset$, 
\item
or for all $Z \in U$ we have $\dim(\tau_Z(W))= \dim(W)$, the intersection $\tau_Z(W) \cap Y_I$ is transversal, and all intersection points lie in the locus inside $Z_\Gr(W \setminus E_Z)$ where the map $Z_\Gr|_{W \setminus E_Z}: W\setminus E_Z \to Z_\Gr(W \setminus E_Z)$ has fibers of cardinality exactly $d_Z$, where $d_Z$ is the degree of the map $Z_\Gr|_{W \setminus E_Z}$.  Furthermore, $d_Z$ is constant for all $Z \in U$.
\end{enumerate}
\item for all $Z \in U$, we have $W$ intersects $X^Z_I$ transversally, and all intersection points lie in $W \setminus E_Z$.
\end{enumerate}
\end{lemma}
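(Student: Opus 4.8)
The plan is to combine Kleiman transversality (Theorem~\ref{thm:Kleiman}), used both in $\Gr(k,n)$ and in $\Gr(k,k+m)$, with a universal‑family argument over $\oMat(n,k+m)$. Two structural facts drive everything. First, $GL(n)$ acts transitively on $\oMat(n,k+m)$: since $n>k+m$, an element of $\oMat(n,k+m)$ is an injective linear map $\C^{k+m}\hookrightarrow\C^n$, and $GL(n)$ acts transitively on such frames; there is also a commuting action of $GL(k+m)$ by $Z\mapsto Zh$, under which $E_{Zh}=E_Z$, $\tau_{Zh}(W)=\tau_Z(W)\cdot h$, $d_{Zh}=d_Z$, and $Z_\Gr^{-1}(Y_I)$ is replaced by $Z_\Gr^{-1}(Y_I h^{-1})$. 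The orbit maps $GL(n)\to\oMat(n,k+m)$ and $GL(n)\times GL(k+m)\to\oMat(n,k+m)$ are open (orbit maps of transitive actions are faithfully flat), so it suffices to establish the conclusions for $Z$ ranging over the image of a dense open set of group elements. Second, by Lemma~\ref{lem:bundle}, $X^Z_I=\overline{Z_\Gr^{-1}(Y_I)}$ has $\codim_{\Gr(k,n)}X^Z_I=km-\dim Y_I=\dim W$ (using the hypothesis $\dim W+\dim Y_I=km$), so $W\cap X^Z_I$ is expected to be finite, and $E_Z\cap X^Z_I$ is a \emph{proper} closed subset of the irreducible variety $X^Z_I$, since $X^Z_I\setminus E_Z=Z_\Gr^{-1}(Y_I)\ne\emptyset$ by surjectivity of the fibre bundle $Z_\Gr$.

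Next I would introduce the universal incidence $\mathcal W^\circ=\{(X,Z):X\in W,\ X\notin E_Z\}\subseteq\Gr(k,n)\times\oMat(n,k+m)$, with projection $p\colon\mathcal W^\circ\to\oMat(n,k+m)$ (fibre $W\setminus E_Z$ over $Z$) and the morphism $q\colon\mathcal W^\circ\to\Gr(k,k+m)$, $(X,Z)\mapsto Z_\Gr(X)$. The key technical point is that $q$ is flat: for any $Y\in\Gr(k,k+m)$ the fibre $q^{-1}(Y)$ fibres over $W$ with fibre over $X$ a nonempty open subvariety of the $km$‑codimensional linear subspace $\{Z\in\Mat(n,k+m):X\cdot Z\subseteq Y\}$, so $q$ is equidimensional with fibre dimension $\dim\mathcal W^\circ-\dim\Gr(k,k+m)$; since $\mathcal W^\circ$ is smooth and $\Gr(k,k+m)$ is regular, $q$ is flat, and on replacing $W$ by its smooth locus $W^{\sm}$ it is flat with smooth fibres. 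Applying Lemma~\ref{lem:Kleiman} to this diagram with $S=\oMat(n,k+m)$ and with the third object of the diagram equal to $Y^{\sm}_I$ (for part~(2), using the $W^{\sm}$‑version) and to $\mathrm{Sing}(Y_I)$ (for part~(1)), we obtain that for $Z$ in a dense open subset, $W\cap Z_\Gr^{-1}(Y_I)$ is a finite set of reduced points lying in the smooth loci of $W$ and of $X^Z_I$. Separately, letting $\mathcal T=\overline{\{(Z_\Gr(X),Z):(X,Z)\in\mathcal W^\circ\}}\subseteq\Gr(k,k+m)\times\oMat(n,k+m)$, whose fibre over $Z$ is $\tau_Z(W)$, the irreducibility of $W$ together with the theorem on fibre dimension and generic flatness give a dense open on which $\tau_Z(W)$ is irreducible of a fixed dimension $\delta\le\dim W$, the degree $d_Z$ equals a fixed value $d$, and --- when $\delta=\dim W$, so that $Z_\Gr|_{W\setminus E_Z}$ is generically finite and hence (characteristic $0$) generically étale --- the ``bad locus'' $B_Z\subseteq\tau_Z(W)$ of points not having exactly $d$ preimages in $W\setminus E_Z$ is a proper closed subset of dimension $<\dim W$.

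It remains to supply $W\not\subseteq E_Z$ and $W\cap X^Z_I\subseteq W\setminus E_Z$ (completing (1) and (3)) and the alternative (2). For the first two, fix a base point $Z_0$ and apply Theorem~\ref{thm:Kleiman}(1) with the $GL(n)$‑action: for generic $g$, $W\cap gE_{Z_0}$ has dimension $\dim W-(m+1)<\dim W$, and $W\cap g(E_{Z_0}\cap X^{Z_0}_I)=\emptyset$ because $\dim W+\dim(E_{Z_0}\cap X^{Z_0}_I)<\dim\Gr(k,n)$; since $E_{g\cdot Z_0}=g\cdot E_{Z_0}$ and $X^{g\cdot Z_0}_I=g\cdot X^{Z_0}_I$ (Lemma~\ref{lem:inverse} and the chosen conventions), this is (1) and the $E_Z$‑avoidance in (3) for $Z=g\cdot Z_0$. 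For (2), note that for generic $g$ the point $g\cdot Z_0$ lies in the dense open of the previous paragraph, so $\tau_{g\cdot Z_0}(W)=\tau_{Z_0}(g^{-1}W)$ is irreducible of dimension $\delta$ (and, when $\delta=\dim W$, $B_{g\cdot Z_0}$ is proper closed of dimension $<\dim W$); fixing such a $g$ and applying Theorem~\ref{thm:Kleiman} in $\Gr(k,k+m)$ to $\tau_{g\cdot Z_0}(W)$ and $Y_I$: if $\delta<\dim W$ then for generic $h$, $\tau_{g\cdot Z_0}(W)\cap Y_I h^{-1}=\emptyset$ (as $\delta+\dim Y_I<km$), which after translating by $h$ is alternative (a) of (2) for $Z=g\cdot Z_0\cdot h$; if $\delta=\dim W$ then for generic $h$ the finite intersection $\tau_{g\cdot Z_0}(W)\cap Y_I h^{-1}$ is transversal, avoids the singular loci, and avoids $B_{g\cdot Z_0}h^{-1}$ (since $\dim B_{g\cdot Z_0}+\dim Y_I<km$), which after translating by $h$ is alternative (b) of (2), with $d_Z=d$ and the intersection points lying in the locus of $Z_\Gr(W\setminus E_Z)$ with exactly $d$ preimages. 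Each of these finitely many conditions holds for $(g,h)$ in a set containing a dense open of $GL(n)\times GL(k+m)$; their common refinement is dense open, and its image $U$ under the orbit map is the required Zariski‑open subset of $\oMat(n,k+m)$.

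The main obstacle is the coordination: the subvarieties $E_Z$, $X^Z_I$, $\tau_Z(W)$, $B_Z$ all move together with $Z$, so no single application of Kleiman transversality controls everything at once. The way out is to separate the genericity living in $\Gr(k,n)$ (moving $W$ by $GL(n)$, which by transitivity already reaches a dense open of $\oMat(n,k+m)$) from the genericity living in $\Gr(k,k+m)$ (moving $Y_I$ by $GL(k+m)$), and to extract the constancy of $\delta$ and $d_Z$ and the smallness of $B_Z$ from the universal family $\mathcal T\to\oMat(n,k+m)$ \emph{before} invoking transversality on the $\Gr(k,k+m)$ side. The one genuinely computational ingredient is the equidimensionality (hence flatness) of $q$, which is what allows Lemma~\ref{lem:Kleiman} to deliver the finiteness and reducedness of $W\cap Z_\Gr^{-1}(Y_I)$ without a circular appeal to the transversality being proven; the supporting dimension estimates ($E_Z\cap X^Z_I\subsetneq X^Z_I$, and the singular loci being of strictly smaller dimension) are routine.
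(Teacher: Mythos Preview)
Your argument is correct and uses the same ingredients as the paper (Kleiman transversality, Lemma~\ref{lem:Kleiman}, universal families over $\oMat(n,k+m)$, and the commuting $GL(n)$ and $GL(k+m)$ actions), but with the roles of Theorem~\ref{thm:Kleiman} and Lemma~\ref{lem:Kleiman} swapped between parts~(2) and~(3). The paper obtains~(3) by direct Kleiman transversality in $\Gr(k,n)$ (moving $X^Z_I$ by $GL(n)$), and obtains~(2) by applying Lemma~\ref{lem:Kleiman} to a carefully chosen $GL(k+m)$-stable open piece $T$ of the \emph{image} family in $\Gr(k,k+m)\times\oMat$, with flatness of $T\to\Gr(k,k+m)$ deduced from $GL(k+m)$-equivariance. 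You instead obtain~(3) by applying Lemma~\ref{lem:Kleiman} to the \emph{source} family $q:\mathcal W^\circ\to\Gr(k,k+m)$, and obtain~(2) by fixing a good $Z$ and invoking Theorem~\ref{thm:Kleiman} in $\Gr(k,k+m)$ (moving $Y_I$ by $GL(k+m)$), using the image family $\mathcal T$ only to extract the constancy of $\delta$, $d$ and the smallness of $B_Z$. Your route is arguably cleaner in that it avoids constructing $T$; in exchange, your two-step ``generic $g$, then generic $h$'' argument implicitly uses a constructibility/Chevalley step to pass to a dense open of $GL(n)\times GL(k+m)$.

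One genuine gap to patch: you deduce flatness of $q$ from miracle flatness by asserting that $\mathcal W^\circ$ is smooth, but $\mathcal W^\circ$ is open in $W\times\oMat(n,k+m)$ and is smooth (or even Cohen--Macaulay) only when $W$ is. Restricting to $W^{\mathrm{sm}}$ suffices for Lemma~\ref{lem:Kleiman}(2), but for Lemma~\ref{lem:Kleiman}(1) you want flatness over all of $\mathcal W^\circ$. The simplest fix is exactly the paper's mechanism transplanted to your setting: $q$ is $GL(k+m)$-equivariant and $GL(k+m)$ acts transitively on $\Gr(k,k+m)$, so generic flatness of $q$ upgrades to flatness everywhere, with no regularity hypothesis on $W$.
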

\begin{proof}
In the following, we shall use the fact that a morphism between irreducible varieties is generically flat, and a morphism between smooth irreducible varieties is generically smooth.  (Here all varieties are over $\C$.)  Similar results are used throughout \cite{Kle}, and we refer the reader there for precise references.

For a fixed full-rank $Z_0$, by Theorem \ref{thm:Kleiman}, there exists an open subset $V \subset GL(n)$ such that $W$ and $gE_{Z_0}$ intersect properly for any $g \in V$.  By dimension considerations we will have $W \setminus gE_{Z_0}$ is open and dense in $W$ for any $g \in V$.  But $gE_{Z_0} = E_{gZ_0}$.  The map $g \mapsto gZ_0$ gives a surjective map from $GL(n)$ to $\oMat(n,k+m)$.  It follows that the image of $V$ contains a Zariski open subset $U_1 \subset \oMat(n,k+m)$.

Define $L \subset W \times U_1$ by
$$
L:= \{(X,Z) \mid X \notin E_Z\}.
$$
Obviously $L$ is an irreducible and open subset of $W \times U_1$, and the fiber of $L$ over $Z \in U_1$ is $W \setminus E_Z$.  Define $\mu: L \to \Gr(k,k+m)$ by $\mu(X,Z) = X\cdot Z \in \Gr(k,k+m)$.  Define
$$
S := (\mu \times \id)(L) = \{\mu(X,Z),Z \mid (X,Z) \in L\} \subset \Gr(k,k+m) \times U_1.
$$  Obviously, $S$ is an irreducible subset of $\Gr(k,k+m) \times U_1$.  We now assume that $\dim(S) = \dim(L)$ so that for $Z$ in a dense open subset of $U_1$, we have $\dim(\tau_Z(W)) =\dim(W)$, and we are in case (b) of part (2) of the Lemma.  (The case where $\dim(S) < \dim(L)$ is easier since one expects $\tau_Z(W)$ and $Y_I$ not to intersect.)   Let $d$ be the degree of the map $\mu \times \id: L \to S$.  For a dense open subset $S' \subset S$, the fiber $(\mu \times \id)^{-1}(s)$ for $s \in S'$ will have exactly $d$ points.

Let $\bar S$ denote the closure of $S$ in $\Gr(k,k+m) \times U_1$.  Over an open subset $U_2 \subset U_1$, the map $p:\bar S \to U_2$ will be flat, and the fiber over $Z \in U_2$ will be reduced and equal to $\tau_Z(W)$, and all these fibers will have the same dimension.  By shrinking $U_2$ if necessary, we may assume that for $Z \in U_2$, the fiber $\tau'_Z(W):=p^{-1}(Z) \cap S'$ is an open dense subset of $\tau_Z(W)$ that is contained in $Z_\Gr(W\setminus E)$, with the additional property that for $Y \in \tau'_Z(W)$, the fiber $Z_\Gr^{-1}(Y)$ has cardinality equal to exactly $d$.  Let $T =p^{-1}(U_2) \cap S' \subset S$.  By replacing $T$ by its smooth locus, and shrinking $U_2$ if necessary, we may in addition assume that $T$ is smooth.

%Let $T' = p|_T^{-1}(U_3) \subset T$.  

For $h \in G = GL(k+m)$, we have $E_Z = E_{Z\cdot h}$ and $X\cdot (Z \cdot h)=  h(X \cdot Z)$.  Thus if the above properties hold for $U_2 \subset \oMat(n,k+m)$ and $T \subset \Gr(k,k+m) \times \oMat(n,k+m)$, they also hold for $U_2 \cdot h$ and $T \cdot h$ (where $h\in G$ sends $(Y,Z)$ to $(hY,Z\cdot h)$).  So we may assume that $U_2$ and $T$ are closed under the $G$-action.

Consider the natural map $q: T \to \Gr(k,k+m)$.  Since $T$ is smooth and irreducible, $q$ is flat with smooth fibers over a dense subset $V$ of $\Gr(k,k+m)$.  The map $q$ commutes with the actions of $G$ on $T'$ and on $\Gr(k,k+m)$.  Since $G$ acts transitively on $\Gr(k,k+m)$, the translations $gV$ for $g \in G$ obviously cover $\Gr(k,k+m)$, and it follows that $q$ is flat over $\Gr(k,k+m)$.

Now we apply Lemma \ref{lem:Kleiman}(1,2) to the family 
\begin{center}
\begin{tikzpicture}[description/.style={fill=white,inner sep=2pt}]
\matrix (m) [matrix of math nodes, row sep=3em,
column sep=2.5em, text height=1.5ex, text depth=0.25ex]
{  & T& & Y^{\sm}_I \\
U_2& & \Gr(k,k+m) & \\ };
%\draw[double,double distance=5pt] (m-1-1) – (m-1-3);
\draw[->,font=\scriptsize]
(m-1-2) edge node[description] {$p$} (m-2-1);
\draw[->,font=\scriptsize]
(m-1-2) edge node[description] {$q$} (m-2-3);
\draw[->,font=\scriptsize]
(m-1-4) edge node[description] {$\iota$} (m-2-3);
\end{tikzpicture}
\end{center}
where $\iota: Y^{\sm}_I \to \Gr(k,k+m)$ is the inclusion of the smooth locus $Y^{\sm}_I$.  We deduce that there is a dense open subset $U_3 \subset U_2$ such that for each $Z \in U_3$, we have that $\tau'_Z(W)$ and $Y^{\sm}_I$ intersect transversally.  

To finish obtaining the statement of (2)(b), it remains to show that we can find $U_4 \subset U_3$ so that for $Z \in U_4$, we have $\tau_Z(W) \cap Y_I = \tau'_Z(W) \cap Y^{\sm}_I$.  That is, there are no intersection points in $\tau_Z(W) \setminus \tau'_Z(W)$ or $Y_I \setminus Y^{\sm}_I$.  To do so, we repeat the argument (using Lemma \ref{lem:Kleiman}(1)) for the family $(\bar S \setminus T) \to U_2$ and the inclusion $\iota: Y_I \to \Gr(k,k+m)$.  The typical fiber of $(\bar S \setminus T) \to U_2$ has lower dimension than $\tau_Z(W)$ since $\tau'_Z(W)$ is open dense in $\tau_Z(W)$.  Thus we expect $\tau_Z(W) \setminus \tau'_Z(W)$ not to intersect $Y_I$.  We deduce that there exists a dense open subset of $U_3$ where all the intersection points of $\tau_Z(W)$ and $Y_I$ lie in $\tau'_Z(W) \cap Y_I$.  Repeating the argument, we can also find a dense open subset of $U_3$ such that the intersection points of $\tau_Z(W)$ and $Y_I$ lie in $\tau_Z(W) \cap Y^{\sm}_I$.  Thus we can find $U_4 \subset \oMat(n,k+m)$ satisfying conditions (1) and (2) of the Lemma.

Finally, the condition (3) holds in an open subset $U' \subset \oMat(n,k+m)$: the argument here only requires applying Theorem \ref{thm:Kleiman}(1,2).  We then set $U:= U' \cap U_4$.  
\end{proof}

An irreducible subvariety $W \subset \Gr(k,n)$ of complex codimension $d$ has a cohomology class $[W] \in H^{2d}(\Gr(k,n))$, which must be non-zero.  Transverse intersections allow one to compute products in cohomology.

\begin{theorem}[{\cite[Appendix B]{Ful}}]\label{thm:Ful}
Let $X$ be a nonsingular variety.  Let $Y, Z \subset X$ be closed irreducible subvarieties.  Suppose $Y$ and $Z$ intersect transversally.  Then we have
$$
[Y] \cdot [Z] = [Y \cap Z]
$$
in the cohomology ring $H^*(X)$.
\end{theorem}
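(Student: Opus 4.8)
The plan is to reduce the statement, via cohomology with supports, to a purely local computation. Recall that for a closed irreducible subvariety $W\subset X$ of codimension $c$, the class $[W]\in H^{2c}(X)$ is the image, under the forgetful map $H^{2c}_W(X)\to H^{2c}(X)$, of a canonical refined class $\sigma_W\in H^{2c}_W(X)$ in the cohomology of $X$ with supports along $W$; on the smooth locus $W^{\sm}$ the class $\sigma_W$ restricts to the Thom class of the normal bundle of $W^{\sm}$ in $X$, and this determines $\sigma_W$ uniquely because $W\setminus W^{\sm}$ has codimension at least $c+1$, so $H^{2c}_{W\setminus W^{\sm}}(X)=0$ and the restriction $H^{2c}_W(X)\to H^{2c}_{W^{\sm}}(X\setminus(W\setminus W^{\sm}))$ is injective. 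The cup product is compatible with supports, giving a pairing $H^{2c}_Y(X)\otimes H^{2c'}_Z(X)\to H^{2(c+c')}_{Y\cap Z}(X)$ that commutes with the forgetful maps; hence it suffices to prove the refined identity $\sigma_Y\cup\sigma_Z=\sigma_{Y\cap Z}$ in $H^{2(c+c')}_{Y\cap Z}(X)$ (when $Y\cap Z=\emptyset$ both sides vanish, since $H^{*}_{\emptyset}(X)=0$).

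To prove the refined identity I would work locally. Transversality forces $Y\cap Z\subseteq Y^{\sm}\cap Z^{\sm}$, and near each point of $Y\cap Z$ one may choose an analytic (or \'etale) chart with coordinates $x_1,\dots,x_N$ in which $Y=\{x_1=\cdots=x_c=0\}$ and $Z=\{x_{c+1}=\cdots=x_{c+c'}=0\}$, so that $Y\cap Z=\{x_1=\cdots=x_{c+c'}=0\}$ and the normal bundle of $Y\cap Z$ in $X$ splits as the direct sum of the restrictions of the normal bundles of $Y$ and of $Z$. In this linear model $\sigma_Y,\sigma_Z,\sigma_{Y\cap Z}$ are the Thom classes of these trivial bundles, and $\sigma_Y\cup\sigma_Z=\sigma_{Y\cap Z}$ is exactly the multiplicativity of Thom classes under direct sums. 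These local identities glue: a class in $H^{2(c+c')}_{Y\cap Z}(X)$ that agrees with $\sigma_{Y\cap Z}$ over $(Y\cap Z)^{\sm}$ must equal $\sigma_{Y\cap Z}$, since the locus where they could disagree has codimension $>c+c'$. Applying the forgetful map $H^{2(c+c')}_{Y\cap Z}(X)\to H^{2(c+c')}(X)$ then yields $[Y]\cdot[Z]=[Y\cap Z]$ in $H^{*}(X)$.

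I expect the real difficulty to be foundational rather than geometric. Since $X$ need not be compact and $Y,Z$ need not be smooth, naive Poincar\'e duality is unavailable, so one must carefully construct $[W]$, the refined class $\sigma_W$, and the cycle-class map (via Borel--Moore homology and local cohomology), and establish the multiplicativity of the cycle map with respect to supports. This is precisely the content of \cite[Appendix B]{Ful}; granting that apparatus, the transverse case is immediate from the linear model above. A more hands-on alternative avoids refined classes altogether: using a tubular neighborhood of $Y$, transversality identifies the normal bundle of $Y\cap Z$ in $Z$ with the restriction of the normal bundle of $Y$ in $X$ and shows that $Z$ meets the neighborhood fiberwise linearly, so the restriction to $Z$ of the Thom class of $Y$ is the Thom class of $Y\cap Z$ in $Z$; pushing forward along $Z\hookrightarrow X$ then gives the identity.
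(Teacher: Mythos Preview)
The paper does not supply its own proof of this theorem: it is quoted verbatim as \cite[Appendix B]{Ful} and used as a black box. So there is nothing in the paper to compare your argument against.

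That said, your sketch is a faithful outline of the standard argument (and indeed of what Fulton does in Appendix~B): define refined classes $\sigma_W\in H^{2c}_W(X)$ via Thom classes on the smooth locus, use the supported cup product $H^*_Y(X)\otimes H^*_Z(X)\to H^*_{Y\cap Z}(X)$, and reduce the refined identity $\sigma_Y\cup\sigma_Z=\sigma_{Y\cap Z}$ to the multiplicativity of Thom classes in a local linear model. Your own caveat is the right one: the substance lies in setting up the cycle-class map and Borel--Moore/local-cohomology formalism for possibly singular, possibly noncompact $Y,Z\subset X$, and that is exactly what the cited appendix provides. Given that apparatus, the transverse case is immediate, as you say.
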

When $Y \cap Z$ is a finite set of $r$ (reduced) points, we have $[Y\cap Z] = r[\pt] \in H^*(X)$.

Let $E_\bullet$ be the standard flag in $\C^n$.  The cohomology ring $H^*(\Gr(k,n))$ vanishes in odd degrees, and the set $\{[X_I(E_\bullet)] \mid \codim(X_I) = d\}$ of Schubert classes forms a $\Z$-basis of $H^{2d}(\Gr(k,n))$. 

Recall that $H^*(\Gr(k,n))$ is isomorphic to the quotient of the ring $\Lambda$ of symmetric functions by an ideal $I_{k,n}$ (see \cite{Ful}).  Under this identification, we have
$$
[X_I] = s_{\lambda(I)}
$$
where $\lambda(I) = (i_k-k,i_{k-1}-(k-1),\ldots,i_1-1)$, and $s_\lambda$ denotes a Schur function.  Thus $[\Gr(k,n)] = s_{(0)}$ and $[\pt] = s_{(n-k)^k}$.  Let $\lambda^c$ denote the 180 degree rotation of the complement of $\lambda$ inside the $(n-k)^k$ rectangle.  Then $\lambda^c(J) = \lambda(I)$ where $I = J^c := \{(n+1) -j \mid j \in  J\}$.  Inside $H^*(\Gr(k,n))$, we have the equality 
\begin{equation}\label{eq:dual}
s_\lambda \, s_{\mu} = \begin{cases} 1 & \mbox{$\mu=\lambda^c$} \\
0 & \mbox{otherwise}
\end{cases}
\end{equation}
for $|\lambda|+|\mu| = k(n-k)$.  To summarize, a class $\sigma \in H^{2r}(\Gr(k,n))$ is determined by calculating $\sigma \, s_\mu$ for all $\mu$ satisfying $|\mu| = k(n-k)-r$.

Let $W \subset \Gr(k,n)$ be an irreducible subvariety.  Recall that in Section \ref{sec:intro}, we defined the truncation $\tau_{k+m}([W]) \in H^*(\Gr(k,k+m))$. 

\begin{proposition}\label{prop:main}
Let $U_I \in \Mat(n,k+m)$ denote the Zariski-open subset of Lemma \ref{lem:main} for $Y_I$, and let $U = \bigcap_I U_I$ where the intersection is over all $I$ such that $\dim(W) +\dim(Y_I) = km$.  
\begin{enumerate}
\item
If $\tau_{k+m}([W]) = 0$ then $\dim(\tau_Z(W)) < \dim(W)$ for all $Z \in U$.
\item
If $\tau_{k+m}([W]) \neq 0$ then for all $Z \in U$, we have $\dim(\tau_Z(W)) = \dim(W)$ and 
$$
[\tau_Z(W)] = \frac{1}{d} \tau_{k+m}([W])
$$
where $d$ is the degree of $Z_\Gr|_{W \setminus E_Z}$.
\end{enumerate}
\end{proposition}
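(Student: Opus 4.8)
The plan is to pin down the class $[\tau_Z(W)]$, in the case where it has the expected dimension $\dim W$, by pairing it against a basis of Schubert classes of complementary dimension in $\Gr(k,k+m)$, and to convert each resulting intersection number into a coefficient of $[W] \in H^*(\Gr(k,n))$ using the preimage Schubert varieties $X^Z_I = \overline{Z_\Gr^{-1}(Y_I)}$ of Lemma \ref{lem:inverse}. Write $[W] = \sum_\lambda c_\lambda s_\lambda$ and $\delta = \dim W$, so that $[W]$ is homogeneous of codimension $k(n-k)-\delta$ and $\tau_{k+m}([W]) = \sum_{\mu \subseteq (m)^k} c_{\mu^{+\ell}} s_\mu$ is supported on the partitions $\mu \subseteq (m)^k$ with $|\mu| = km - \delta$. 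Fix $Z \in U$. Since $\dim \tau_Z(W)$ is independent of any auxiliary choice, Lemma \ref{lem:main}(2) forces a dichotomy: either alternative (a) holds for \emph{every} Schubert variety $Y_I$ with $\dim Y_I = km - \delta$, in which case $\dim \tau_Z(W) < \delta$, or alternative (b) holds for all such $Y_I$, in which case $\dim \tau_Z(W) = \delta$. I will show that the first alternative holds precisely when $\tau_{k+m}([W]) = 0$; since $[\tau_Z(W)] \neq 0$ always (it is the class of a nonempty subvariety), this gives both parts of the Proposition at once.

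The heart of the argument is the following intersection count. Fix $I \subseteq [k+m]$ with $\mu := \lambda(I)$ of size $\delta$; then $[Y_I] = s_\mu$ in $H^*(\Gr(k,k+m))$, and viewing $I$ inside $[n]$ does not change the formula for $\lambda(I)$, so $[X^Z_I] = s_\mu$ in $H^*(\Gr(k,n))$ as well, with $\mu \subseteq (m)^k \subseteq (n-k)^k$. By Lemma \ref{lem:main}(3), $W \cap X^Z_I$ is transversal, finite, and reduced, with all points in $W \setminus E_Z$; moreover the argument in the proof of Lemma \ref{lem:inverse}, run on the locus $\Gr(k,n)\setminus E_Z$ where $Z_\Gr$ is defined, shows that $X^Z_I$ restricted to that locus equals $Z_\Gr^{-1}(Y_I)$, so $W \cap X^Z_I = (W \setminus E_Z) \cap Z_\Gr^{-1}(Y_I)$ and $Z_\Gr$ carries this finite set onto $\tau_Z(W) \cap Y_I$. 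In alternative (b), Lemma \ref{lem:main}(2)(b) tells us $\tau_Z(W) \cap Y_I$ is transversal and every one of its points has exactly $d$ preimages in $W \setminus E_Z$; writing $r_I := \#(\tau_Z(W) \cap Y_I)$ and invoking Theorem \ref{thm:Ful} twice, we get $\#(W \cap X^Z_I) = d\, r_I$ and therefore
$$
[W]\cdot[X^Z_I] = d\, r_I\, [\pt] \quad \text{in } H^*(\Gr(k,n)), \qquad [\tau_Z(W)]\cdot s_\mu = r_I\, [\pt] \quad \text{in } H^*(\Gr(k,k+m)).
$$
In alternative (a), $\tau_Z(W) \cap Y_I = \emptyset$ forces $W \cap X^Z_I = \emptyset$, hence $[W]\cdot[X^Z_I] = 0$.

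It remains to do the combinatorial translation. Since every row of $\mu$ is at most $m$, its complement $\mu^\vee$ inside the $(n-k)^k$ rectangle contains a full block of $\ell$ columns of height $k$, and removing them leaves exactly the complement $\mu^c$ of $\mu$ inside the $(m)^k$ rectangle; that is, $\mu^\vee = (\mu^c)^{+\ell}$. So \eqref{eq:dual} in $\Gr(k,n)$ identifies $[W]\cdot s_\mu$ with $c_{(\mu^c)^{+\ell}}[\pt]$, while \eqref{eq:dual} in $\Gr(k,k+m)$ identifies $[\tau_Z(W)]\cdot s_\mu$ with $a_{\mu^c}[\pt]$, where $[\tau_Z(W)] = \sum_\nu a_\nu s_\nu$. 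In alternative (b), comparing these with the previous display gives $c_{(\mu^c)^{+\ell}} = d\, a_{\mu^c}$ for all $\mu \subseteq (m)^k$ with $|\mu| = \delta$; as $\mu^c$ then ranges over all partitions in $(m)^k$ of size $km - \delta$, and $c_{(\mu^c)^{+\ell}}$ is by definition the coefficient of $s_{\mu^c}$ in $\tau_{k+m}([W])$, this says precisely $\tau_{k+m}([W]) = d\, [\tau_Z(W)]$, which is part (2). In alternative (a), the same comparison gives $c_{(\mu^c)^{+\ell}} = 0$ for all such $\mu$, hence $\tau_{k+m}([W]) = 0$; contrapositively, $\tau_{k+m}([W]) \ne 0$ forces alternative (b), and $\tau_{k+m}([W]) = 0$ forces alternative (a) and thus $\dim \tau_Z(W) < \dim W$, which is part (1).

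All the real work is hidden in Lemma \ref{lem:main}, which is already proved; granting it, the step I expect to need the most care is the bijective matching of the points of $W \cap X^Z_I$ with the fibers of $Z_\Gr|_{W \setminus E_Z}$ over $\tau_Z(W) \cap Y_I$. This requires not only that the intersection points avoid $E_Z$ and the singular loci — which is exactly what parts (2)(b) and (3) of Lemma \ref{lem:main} provide — but also that away from $E_Z$ the variety $X^Z_I$ \emph{equals} $Z_\Gr^{-1}(Y_I)$ rather than merely containing it, which must be extracted from the proof of Lemma \ref{lem:inverse}.
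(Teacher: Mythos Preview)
Your proof is correct and follows essentially the same route as the paper's own argument: pair $[\tau_Z(W)]$ against complementary Schubert classes $[Y_I]$, pull back to $W \cap X^Z_I$ via Lemma~\ref{lem:inverse}, use Lemma~\ref{lem:main} to count points transversally and relate the two counts by the factor $d$, and then translate via \eqref{eq:dual} and the identity $\mu^\vee = (\mu^c)^{+\ell}$. Your organization around the (a)/(b) dichotomy is slightly cleaner than the paper's case split on $\tau_{k+m}([W])$, and you are right to flag that the equality $X^Z_I \cap (\Gr(k,n)\setminus E_Z) = Z_\Gr^{-1}(Y_I)$, not just containment, is what makes the fiber count go through; this is exactly what the proof of Lemma~\ref{lem:inverse} establishes (the Schubert conditions for $j' \le n-k-m$ are vacuous since $I \subset [k+m]$).
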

\begin{proof}
Suppose $Z \in U$.  If $\tau_{k+m}([W]) \neq 0$ then by condition (3) of Lemma \ref{lem:main}, we can find $I \in \binom{[k+m]}{k}$ satisfying $\dim(W) +\dim(Y_I) = km$ so that $X_I^Z$ intersects $W \setminus E$ in a non-zero number of points.  The image of these points under $Z_\Gr$ lie in $\tau_Z(W) \cap Y_I$, and since this intersection is transverse, we must have $\dim(\tau_Z(W)) = \dim(W)$.  For each $I$, we have that $\tau_Z(W)$ intersects $Y_I$ transversally in a finite number of points $r_I$.  Also $W$ intersects $X_I^Z$ transversally in a finite number of points $s_I$, and from the conditions of Lemma \ref{lem:main}, we deduce that $s_I = d r_I$ from Lemma \ref{lem:inverse}.  Let $\lambda(I)^c$ be the complement of $\lambda(I)$ in the $k \times m$ rectangle.  It follows from Theorem \ref{thm:Ful} that the coefficient of $s_{\lambda(I)^c}$ in $[\tau_Z(W)]$ is equal to $r_I$ which is equal to $1/d$ times the coefficient of $s_{(\lambda(I)^c)^{+\ell}}$ in $[W]$.  Claim (2) follows.

Now suppose $\tau_{k+m}([W]) = 0$.  Then by a similar argument, we deduce that $\tau_Z(W)$ does not intersect any $Y_I$.  This is impossible if $\dim(\tau_Z(W)) = \dim(W)$ since $[\tau_Z(W)]$ has a non-zero cohomology class and the intersections $\tau_Z(W) \cap Y_I$ are transversal.  It follows that $\tau_{k+m}([W]) = 0$ implies that $\dim(\tau_Z(W)) < \dim(W)$.  Claim (1) follows.
\end{proof}

\section{Amplituhedron varieties and affine Stanley symmetric functions}
\label{sec:main}
\subsection{Affine Stanley symmetric functions}
Let $W_n$ denote the affine Coxeter group of type $A$, with generators $s_0,s_1,\ldots,s_{n-1}$, and relations 
\begin{align*}
s_i^2 &= 1 \\
s_i s_j &= s_j s_i & \mbox{if $|i-j| >1$}\\
s_i s_{i+1} s_i &= s_{i+1} s_i s_{i+1} 
\end{align*}
where all indices are taken modulo $n$.  The {\it length} $\ell(w)$ of $w \in W_n$ is the length of the shortest expression of $w$ as a product of the $s_i$.

An element $v \in W_n$ is called \emph{cyclically decreasing} if it has a reduced word $v = s_{i_1} s_{i_2} \cdots s_{i_k}$ such that $i_1, i_2, \ldots, i_k$ are distinct, and if both $i$ and $i+1$ occur then $i+1$ occurs before $i$.  For example, $s_4 s_3 s_1 s_0 s_6$ is cyclically decreasing if $n = 7$.  A cyclically decreasing factorization of $v$ is a factorization $v = v_1 v_2 \cdots v_r$ where $\ell(v) = \ell(v_1) + \ell(v_2) + \cdots + \ell(v_r)$ and each $v_i$ is cyclically decreasing.  For $v \in W_n$, we define the {\it affine Stanley symmetric function}
$$
\tF_v(x_1,x_2,\ldots) = \sum_{v = v_1 v_2 \cdots v_r} x_1^{\ell(v_1)} x_2^{\ell(v_2)} \cdots x_r^{\ell(v_r)}.
$$
In \cite{Lam} it is shown that $\tF_v$ is a symmetric function.

An affine permutation is a bijection $f: \Z \to \Z$ satisfying
\begin{enumerate}
\item $f(i+n) = f(i) + n$
\item $\sum_{i=1}^n (f(i) - i) = kn$
\end{enumerate}
A {\it $(k,n)$-bounded affine permutation} is an affine permutation satisfying
$$
i \leq f(i) \leq i+n.
$$
We denote the (finite) set of $(k,n)$-bounded affine permutations by $\Bound(k,n)$.  The group $W_n$ acts on the set of affine permutations on the right, with $s_i$ acting by swapping $f(i+rn)$ and $f(i+rn+1)$ for all $r \in \Z$.

Let $f_0: \Z \to \Z$ denote the bounded affine permutation given by $f_0(i) = i+k$.  Each bounded affine permutation $f$ has an expression as $f_0 s_{i_1} s_{i_2} \cdots s_{i_\ell}  = f_0 v$ for $v \in W_n$.  The length $\ell(f)$ of $f$ is declared to be equal to the length of $v$. We define $\tF_f : = \tF_v$.

\subsection{The cohomology class of a positroid variety}
Let $X \in \Gr(k,n)$.  Pick a $k \times n$ matrix representing $X$, with columns $v_1,v_2,\ldots,v_n$, and using $v_i = v_{i+n}$ we define $v_i$ for all $i \in \Z$.  Define a function $f_X: \Z \to \Z$ by
$$
f_X(i) = \min_{j \geq i}\left(v_i \in {\rm span}(v_{i+1},v_{i+2},\ldots,v_j) \right).
$$
Note that if $v_i = 0$ then $f_X(i) = i$.  It is not too hard to show \cite{KLS,Pos} that $f_X \in \Bound(k,n)$.

Let $f \in \Bound(k,n)$.  We define the open positroid variety
$$
\oPi_f:= \{X \in \Gr(k,n) \mid f_X = f\}
$$
and the positroid variety $\Pi_f:= \overline{\oPi_f}$.  We have a decomposition $\Gr(k,n) = \sqcup_{f \in\Bound(k,n)} \oPi_f$.  Let $\tF_f \in H^*(\Gr(k,n))$ be the image of the affine Stanley symmetric function in the quotient $\Lambda/I_{k,n} \simeq H^*(\Gr(k,n))$.  

\begin{theorem}[\cite{KLS}]\label{thm:KLS}
We have $[\Pi_f] = \tF_f \in H^*(\Gr(k,n))$.
\end{theorem}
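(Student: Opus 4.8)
The plan is to follow Knutson--Lam--Speyer \cite{KLS}: exhibit $\Pi_f$ inside classical Schubert calculus as a projected Richardson variety, compute its cohomology class there, and match the answer with $\tF_f$ using the combinatorics of \cite{Lam}.

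\textbf{Step 1 (geometric model).} Let $\mathrm{Fl}_n$ be the complete flag variety of $\C^n$ and let $\pi: \mathrm{Fl}_n \to \Gr(k,n)$ be the projection remembering only the $k$-plane. I would first show that $\Pi_f$ is the image under $\pi$ of a Richardson variety $X_u \cap X^w \subseteq \mathrm{Fl}_n$, for a pair $u \le w$ in $S_n$ read off from $f$, and that $\pi$ restricts to a birational morphism $X_u \cap X^w \to \Pi_f$. The bounded affine permutation $f$ is equivalent to a cyclic list of rank conditions on a matrix representing a point of $\Gr(k,n)$; on $\mathrm{Fl}_n$ these organize into a compatible pair of a Schubert and an opposite Schubert condition cutting out $X_u \cap X^w$, whose image under $\pi$ is $\oPi_f$. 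Birationality of $\pi|_{X_u \cap X^w}$ follows once both this Richardson variety and $\Pi_f$ are known to be irreducible of the same dimension, which is a count on open cells. This is essentially the structural content of \cite{KLS}: positroid varieties coincide with projected Richardson varieties, and in particular are normal, Cohen--Macaulay, and defined by Pl\"ucker relations.

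\textbf{Step 2 (class of the projection).} By Richardson's theorem $X_u$ and $X^w$ meet transversally, so by Theorem \ref{thm:Ful} we have $[X_u \cap X^w] = [X_u]\cdot[X^w]$ in $H^*(\mathrm{Fl}_n)$. Since $\pi$ is proper and restricts to a degree-one map on $X_u \cap X^w$, the projection formula gives
$$
[\Pi_f] \;=\; \pi_*\bigl([X_u]\cdot[X^w]\bigr) \in H^*(\Gr(k,n)).
$$

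\textbf{Step 3 (translation to symmetric functions).} It remains to identify $\pi_*\bigl([X_u]\cdot[X^w]\bigr)$ with $\tF_f$ under $H^*(\Gr(k,n)) \simeq \Lambda/I_{k,n}$. One option is to compute the Gysin map $\pi_*$ explicitly --- it is the divided-difference operator dual to the inclusion $\pi^*$ of symmetric polynomials --- and apply it to a product of a Schubert polynomial and an opposite Schubert polynomial, obtaining, after passing to the stable limit, the generating function over (cyclically-)decreasing factorizations that defines $\tF_f$ in \cite{Lam}. A more geometric alternative avoids the explicit formula: by the remark following \eqref{eq:dual}, the class $[\Pi_f]$ is determined by the products $[\Pi_f]\,s_\mu$, each of which counts the points in which $\Pi_f$ meets a generic translate of a dual Schubert variety (a finite transverse intersection by Theorem \ref{thm:Kleiman}); using the model of Step 1, one verifies these counts equal the coefficients in the Schur expansion of $\tF_f$.

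\textbf{Main obstacle.} The difficulty is concentrated in Step 1: proving that $\Pi_f$ really is a projected Richardson variety \emph{with degree-one projection}. This is far from formal --- the full cyclic list of $n$ rank conditions is highly redundant, so $\Pi_f$ is \emph{not} a transverse intersection of $n$ Schubert varieties and its class is not simply a product of Schur functions --- and it rests on a careful translation between bounded affine permutations, cyclic rank matrices, and Bruhat intervals in $S_n$, together with the geometric fact that the resulting intersection is reduced, irreducible, and of the expected dimension. Granting Step 1, Steps 2 and 3 are standard applications of transversality and of the symmetric-function/Schubert-polynomial dictionary.
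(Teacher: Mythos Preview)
The paper does not prove this statement at all: Theorem~\ref{thm:KLS} is quoted as a black box from \cite{KLS}, with no argument supplied. So there is no ``paper's own proof'' to compare against. Your proposal is in fact a reasonable outline of the proof given in \cite{KLS} itself --- projected Richardson varieties, the pushforward along $\pi$, and the identification with affine Stanley symmetric functions --- and you have correctly located the hard step in establishing that positroid varieties are projected Richardson varieties with degree-one projection.

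One caveat on Step~3: the identification of $\pi_*([X_u]\cdot[X^w])$ with $\tF_f$ in \cite{KLS} is not carried out by either of the two routes you sketch. It goes through the identification of $H^*(\Gr(k,n))$ with a quotient of the homology of the affine Grassmannian (using \cite{Lam} and related work), together with a comparison of the positroid stratification with the affine Bruhat decomposition; the affine Stanley symmetric functions arise naturally on the affine side. Your ``divided-difference'' and ``point-counting'' suggestions are plausible heuristics but would each require substantial additional work to make rigorous, and neither is how the actual argument proceeds.
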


We define the {\it truncated affine Stanley symmetric function} to be $\tau_{k+m}(\tF_f)$, where $\tF_f$ is thought of as an element of $H^*(\Gr(k,n))$.  

\subsection{The main theorem}
Let $f \in\Bound(k,n)$ be a $(k,n)$-bounded affine permutation, and $\Pi_f$ be the positroid variety labeled by $f$ \cite{Pos,KLS}.  For a general $Z$, we define
$$
Y_f := \overline{Z_\Gr(\Pi_f\setminus E_Z)}
$$
to be the closure of the image of $\Pi_f \setminus E_Z$ under $Z_\Gr$.  Obviously $Y_f$ depends on $Z$, but we will suppress this from the notation.  Define $Z_f$ to be $Z_f = Z_\Gr|_{\Pi_f \setminus E_Z}: (\Pi_f \setminus E_Z) \to Y_f$.

Our main result follows from Theorem \ref{thm:KLS} and Proposition \ref{prop:main} applied to $W = \Pi_f$.
\begin{theorem}\label{thm:main}
There exists a Zariski-open set $U \subset \Mat(n,k+m)$ such that
\begin{enumerate}
\item
if $\tau_{k+m}([\Pi_f]) = 0$ then $\dim(Y_f) < \dim(\Pi_f)$ for all $Z \in U$, and
\item
if $\tau_{k+m}([\Pi_f]) \neq 0$ then for all $Z \in U$, we have $\dim(Y_f) = \dim(\Pi_f)$ and 
$$
[Y_f] = \frac{1}{d} \tau_{k+m}([\Pi_f]) \in H^*(\Gr(k,k+m))
$$
where $d = \deg(Z_f)$ is the degree of $Z_f$, which is constant for all $Z \in U$.
\end{enumerate}
\end{theorem}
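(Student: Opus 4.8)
The plan is to invoke Proposition \ref{prop:main} with $W = \Pi_f$ and then rewrite its two conclusions using Theorem \ref{thm:KLS}. First I would check the hypotheses of Proposition \ref{prop:main}: the positroid variety $\Pi_f = \overline{\oPi_f}$ is an irreducible closed subvariety of $\Gr(k,n)$ (see \cite{KLS}), and for a general $Z$ the locus $\Pi_f \setminus E_Z$ is dense in $\Pi_f$, so that $\tau_Z(\Pi_f) = \overline{Z_\Gr(\Pi_f \setminus E_Z)}$ is precisely the variety $Y_f$ defined above and the restriction $Z_\Gr|_{\Pi_f \setminus E_Z}$ is precisely the map $Z_f$. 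Hence every quantity appearing in Proposition \ref{prop:main} specializes to the corresponding one in the theorem, with $d$ the degree of $Z_f$.

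Next I would pin down the open set. Let $U := \bigcap_I U_I$, where $U_I \subset \oMat(n,k+m)$ is the Zariski-open set furnished by Lemma \ref{lem:main} for the Schubert variety $Y_I$, the intersection running over the finitely many $I \in \binom{[k+m]}{k}$ satisfying $\dim(\Pi_f) + \dim(Y_I) = km$; this is exactly the $U$ of Proposition \ref{prop:main}. Since $\oMat(n,k+m)$ is irreducible and each $U_I$ is dense and Zariski-open, $U$ is a nonempty Zariski-open subset of $\Mat(n,k+m)$, as required by the statement.

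Applying Proposition \ref{prop:main} with $W = \Pi_f$ then yields, for every $Z \in U$: if $\tau_{k+m}([\Pi_f]) = 0$ then $\dim(Y_f) = \dim(\tau_Z(\Pi_f)) < \dim(\Pi_f)$, which is part (1); and if $\tau_{k+m}([\Pi_f]) \neq 0$ then $\dim(Y_f) = \dim(\Pi_f)$ and $[Y_f] = \frac{1}{d}\tau_{k+m}([\Pi_f])$ with $d = \deg(Z_f)$, where by Lemma \ref{lem:main}(2)(b) this degree is constant on $U$. Substituting the identity $[\Pi_f] = \tF_f$ of Theorem \ref{thm:KLS}, which holds in $H^*(\Gr(k,n)) \cong \Lambda/I_{k,n}$ and is compatible with the definition of $\tau_{k+m}$, converts these two statements into the two assertions of the theorem.

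The substantive work — Kleiman-type transversality carried out uniformly over the family of matrices $Z$, the flatness and generic-smoothness reductions, and the tracking of the degree $d$ — is entirely contained in Lemma \ref{lem:main} and Proposition \ref{prop:main}, so no serious obstacle remains at this stage. The only points that genuinely need care are verifying that $Y_f$, $Z_f$, and $\tau_{k+m}$ as set up in this section agree with the objects appearing in Proposition \ref{prop:main}, and recalling that $\Pi_f$ is irreducible so that the proposition applies in the first place; once these are in place the theorem follows immediately.
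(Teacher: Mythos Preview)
Your proposal is correct and follows exactly the paper's approach: the paper's proof is the single sentence ``Our main result follows from Theorem \ref{thm:KLS} and Proposition \ref{prop:main} applied to $W = \Pi_f$,'' and you have simply unpacked that sentence, verifying the hypotheses (irreducibility of $\Pi_f$) and matching the notation ($Y_f = \tau_Z(\Pi_f)$, $Z_f = Z_\Gr|_{\Pi_f\setminus E_Z}$, $U = \bigcap_I U_I$).
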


\begin{definition}
If $\dim Y_f = \dim \Pi_f$ for a general $Z$, then we declare $Y_f$ to be an \emph{amplituhedron variety}, and say that the affine permutation $f$ has \emph{kinematical support}.
\end{definition}

From now on, $Y_f$, $Z_f$ and $\deg(Z_f)$ will always refer to an amplituhedron variety, the corresponding map and its degree, for $Z \in U$. 

\begin{corollary}
Let $f$ be a $(k,n)$-bounded affine permutation.  Then $f$ has kinematical support if and only if for some partition $\lambda$ satisfying $\ell^k \subseteq \lambda \subseteq (n-k)^k$, the coefficient of $s_\lambda$ in the affine Stanley symmetric function $\tF_f$ is non-zero.
\end{corollary}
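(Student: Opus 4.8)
The plan is to read this criterion off from Theorem~\ref{thm:main}, using Theorem~\ref{thm:KLS} to replace $[\Pi_f]$ by $\tF_f$, and then to unwind the definition of the truncation operator $\tau_{k+m}$. By definition, $f$ has kinematical support if and only if $\dim Y_f = \dim \Pi_f$ for general $Z$; by the dichotomy in Theorem~\ref{thm:main}(1)--(2) this occurs if and only if $\tau_{k+m}([\Pi_f]) \neq 0$ in $H^*(\Gr(k,k+m))$. Since $[\Pi_f] = \tF_f$ in $H^*(\Gr(k,n))$ by Theorem~\ref{thm:KLS}, the criterion to be established is simply that $\tau_{k+m}(\tF_f) \neq 0$.

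It then remains to translate $\tau_{k+m}(\tF_f) \neq 0$ into a statement about Schur coefficients. Writing the image of $\tF_f$ in $H^*(\Gr(k,n)) \cong \Lambda/I_{k,n}$ as $\sum_{\lambda \subseteq (n-k)^k} c_\lambda s_\lambda$, the definition $\tau_{k+m}(\tF_f) = \sum_{\mu \subseteq (m)^k} c_{\mu^{+\ell}} s_\mu$ together with the fact that $\{s_\mu \mid \mu \subseteq (m)^k\}$ is a $\Z$-basis of $H^*(\Gr(k,k+m))$ gives at once that $\tau_{k+m}(\tF_f) \neq 0$ if and only if $c_{\mu^{+\ell}} \neq 0$ for some $\mu \subseteq (m)^k$. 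Now $\mu \mapsto \mu^{+\ell}$ is a bijection from the set of partitions $\mu \subseteq (m)^k$ onto the set of partitions $\lambda$ with $\ell^k \subseteq \lambda \subseteq (n-k)^k$ (here one uses $\ell + m = n-k$, and that any such $\lambda$ has exactly $k$ parts), so the condition becomes $c_\lambda \neq 0$ for some $\lambda$ with $\ell^k \subseteq \lambda \subseteq (n-k)^k$.

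Finally I would identify the coefficient $c_\lambda$, which a priori is a coefficient of the \emph{class} of $\tF_f$ in the quotient ring, with the coefficient of $s_\lambda$ in $\tF_f$ regarded as an honest element of $\Lambda$. This rests on the standard fact that $I_{k,n}$ is spanned, as a $\Z$-module, by the Schur functions $s_\nu$ with $\nu \not\subseteq (n-k)^k$ (this is an ideal by Littlewood--Richardson, since $c^\nu_{\lambda\mu}\ne 0$ forces $\lambda \subseteq \nu$, hence $\nu$ also sticks out of the rectangle); consequently the quotient map $\Lambda \twoheadrightarrow \Lambda/I_{k,n}$ fixes $s_\lambda$ for $\lambda \subseteq (n-k)^k$ and kills it otherwise, so $c_\lambda$ is exactly the $s_\lambda$-coefficient of $\tF_f \in \Lambda$ for every $\lambda$ in the relevant range $\ell^k \subseteq \lambda \subseteq (n-k)^k$. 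Assembling these three steps yields the Corollary. I do not expect a genuine obstacle: all the geometric content is already contained in Theorem~\ref{thm:main}, and the only points needing care are the bookkeeping of which Schur expansion lives in $\Lambda$ versus in the cohomology quotient, and the verification that the rectangle inclusions $\ell^k \subseteq \lambda \subseteq (n-k)^k$ cut out precisely the support of $\tau_{k+m}$.
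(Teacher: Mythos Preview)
Your proposal is correct and follows exactly the route the paper intends: the paper states the Corollary immediately after Theorem~\ref{thm:main} without proof, treating it as a direct consequence of the dichotomy in that theorem together with the definition of $\tau_{k+m}$. Your argument simply spells out the bookkeeping (the bijection $\mu\mapsto\mu^{+\ell}$ and the identification of Schur coefficients in $\Lambda$ versus $\Lambda/I_{k,n}$) that the paper leaves implicit.
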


When $\dim(\Pi_f) = km$, this says that $f$ has kinematical support if and only if $s_{\ell^k}$ appears in $\tF_f$ with non-zero coefficient.  In this case, the coefficient of $s_{\ell^k}$ in $\tF_f$ can be computed using the affine Pieri rule for the flag variety (see Remark \ref{rem:pieri}).

\begin{example}
Let $k=2$, $m = 4$, and $n = 8$. Suppose $f = [4,3,6,5,8,7,10,9] \in Bound(2,8)$, which can be written as $f = f_0 s_1 s_3 s_5 s_7$.  Then from the definitions we have $\tF_f = (\sum_{i=1}^\infty x_i)^4$.  The coefficient of $s_{(2,2)}$ in $\tF_f$ is equal to $2$.  So $f$ has kinematical support and the map $Z_f: (\Pi_f \setminus E_Z) \to Y_f$ has degree $2$.

In a similar manner we can easily produce maps $Z_f$ of arbitrarily high finite degree.
\end{example}

\begin{remark}
Each positroid variety $\Pi_f$ comes from a canonical meromorphic top-form $\omega_{\Pi_f}$.  If $f$ has kinematical support then $Z_f$ is generically finite and we can define a canonical form $\omega_{Y_f}$ by pushing forward the canonical form $\omega_{\Pi_f}$ of the positroid variety.  See \cite{LamCan}.
\end{remark}

\begin{remark}
If $m = 4$ and $\dim Y_f = \dim \Pi_f = 4k = \dim \Gr(k,k+4)$, then our notion of kinematical support essentially agrees with the notion from the theory of scattering amplitudes \cite{ABCGPT}, though we caution that the work \cite{ABCGPT} is mostly set in ``momentum space", while the present work is set in ``momentum-twistor space".  Physically, it is clear that when considering amplituhedron cells of dimension $4k$, one should restrict to those cells with kinematical support.  %For cells of dimension smaller than $4k$, it is not clear to me if this notion has a simple physical meaning.
\end{remark}

\subsection{Degree of $Z_f$}

For the cells $Y_f$ of dimension $km$ that are used to triangulate the amplituhedron, we have an easy criterion for the degree.  For $m =4$, this is presumably the same combinatorial criterion discussed in \cite{ABCGPT}, after translating from ``momentum space" to ``momentum-twistor space".

\begin{proposition}
Suppose $\dim(\Pi_f) = km$.  Then the degree of $Z_f$ is the coefficient of $s_{\ell^k}$ in $\tF_f$, if this coefficient is positive.  If this coefficient is 0, then $f$ does not have kinematical support.
\end{proposition}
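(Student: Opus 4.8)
The plan is to deduce this Proposition directly from Theorem~\ref{thm:main} by evaluating the truncation $\tau_{k+m}(\tF_f)$ in the special range $\dim(\Pi_f)=km$. First I would observe that $\dim(\Pi_f)=km$ forces $\codim(\Pi_f)=k(n-k)-km=k\ell$, and since $\tF_f$ is homogeneous of degree $\ell(f)=\codim(\Pi_f)$ --- every monomial in its cyclically decreasing expansion has total degree $\ell(f)$, and $[\Pi_f]=\tF_f$ by Theorem~\ref{thm:KLS} --- the cohomology class $[\Pi_f]=\tF_f$ is homogeneous of degree $k\ell$. Writing $[\Pi_f]=\sum_{\lambda\subseteq(n-k)^k}c_\lambda s_\lambda$ in the Schubert basis, every nonzero $c_\lambda$ therefore has $|\lambda|=k\ell$. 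Now $\tau_{k+m}(\tF_f)=\sum_{\mu\subseteq(m)^k}c_{\mu^{+\ell}}s_\mu$, and since $|\mu^{+\ell}|=|\mu|+k\ell$ the only term that can survive is $\mu=\emptyset$, with $\emptyset^{+\ell}=(\ell^k)$ the $k\times\ell$ rectangle, which indeed satisfies $(\ell^k)\subseteq(n-k)^k$ because $\ell=n-k-m<n-k$. So $c:=c_{(\ell^k)}$ is a legitimate Schubert coefficient of $[\Pi_f]$, namely the coefficient of $s_{\ell^k}$ in $\tF_f$, and $\tau_{k+m}(\tF_f)=c\,s_\emptyset=c\,[\Gr(k,k+m)]$.

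Next I would apply Theorem~\ref{thm:main}. If $c=0$, then $\tau_{k+m}([\Pi_f])=0$, so part (1) gives $\dim(Y_f)<\dim(\Pi_f)$ for general $Z$; that is, $f$ has no kinematical support. If $c\neq0$, then $\tau_{k+m}([\Pi_f])\neq0$, so part (2) gives $\dim(Y_f)=\dim(\Pi_f)=km=\dim\Gr(k,k+m)$ and $[Y_f]=\frac{1}{d}\,c\,s_\emptyset$ with $d=\deg(Z_f)$. The one step that is not pure bookkeeping is the remark that a closed irreducible subvariety of the irreducible variety $\Gr(k,k+m)$ of full dimension must equal $\Gr(k,k+m)$; hence $Y_f=\Gr(k,k+m)$ and $[Y_f]=[\Gr(k,k+m)]=s_\emptyset$. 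Comparing coefficients of $s_\emptyset$ forces $d=c$, which is the assertion. Incidentally this also shows that $c=d\geq1$ is automatically a positive integer as soon as it is nonzero, so the hypothesis ``if this coefficient is positive'' is not a genuine restriction.

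I do not expect a serious obstacle: the argument is essentially bookkeeping on top of Theorem~\ref{thm:main}, the only real content being the identification $Y_f=\Gr(k,k+m)$ that converts the class equation $[Y_f]=\frac{1}{d}\tau_{k+m}([\Pi_f])$ into the numerical identity $d=c$. If one prefers to sidestep Theorem~\ref{thm:main}, the same equality can be read off Lemma~\ref{lem:main} directly by choosing $I=\{m+1,\dots,m+k\}$, for which $Y_I$ is a single point and $X_I^Z=\Gr(k,Z^{-1}(Y))$ is a sub-Grassmannian with $[X_I^Z]=s_{(m^k)}$: part (2b) together with Lemma~\ref{lem:inverse} then gives $\#(\Pi_f\cap X_I^Z)=d\cdot\#(\Gr(k,k+m)\cap\{Y\})=d$, while by \eqref{eq:dual} the left-hand side equals the intersection number $[\Pi_f]\cdot[X_I^Z]=c_{(\ell^k)}=c$.
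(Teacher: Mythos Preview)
Your proposal is correct and is precisely the argument the paper has in mind: the proposition is stated without proof because it follows immediately from Theorem~\ref{thm:main} by the degree bookkeeping you spell out, namely that $\dim(\Pi_f)=km$ forces $\tau_{k+m}(\tF_f)=c_{(\ell^k)}s_\emptyset$, and then $Y_f=\Gr(k,k+m)$ gives $d=c_{(\ell^k)}$. Your alternative reading via Lemma~\ref{lem:main} with $I=\{m+1,\dots,m+k\}$ is also valid and is essentially how Proposition~\ref{prop:main} (and hence Theorem~\ref{thm:main}) was proved in the first place.
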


\begin{problem}
Let $k$ and $m$ be fixed, and allow $n$ to vary.  Is there a uniform bound on $d_f$ for all $f$ with kinematical support?
\end{problem}

\begin{remark}\label{rem:pieri}
Suppose $f$ has kinematical support and $\dim(\Pi_f) = km$.  The Pieri rule for the affine flag manifold conjectured in \cite{LLMS}, and proved in \cite{Lee} can be used to give a manifestly positive formula for the degree $d_f = \deg(Z_f)$.  

Specifically, the (dual) affine Pieri rule gives an identity
$$
e_k \tF_f = \sum_g c_{k,f}^g \tF_g
$$
where the nonnegative numbers $c_{k,f}^g$ count objects called {\it strong strips} \cite{LLMS}.  Now, we have
$$
[s_{\ell^k}]\tF_f = [s_{(n-k)^k}] (e_k)^m \tF_f
$$
where $[s_{\ell^k}]\tF_f$ denotes the coefficient of $s_{\ell^k}$ when $\tF_f$ is expanded in terms of Schur functions.  By \cite[Theorem 7.8]{KLS}, $$
[s_{(n-k)^k}] \tF_g = \begin{cases} 1 & \mbox{$g \in \Bound(k,n)$ and $\ell(g) = k(n-k)$} \\
0 & \mbox{otherwise.} 
\end{cases}
$$
Thus $d_f = [s_{\ell^k}]\tF_f$ can be obtained by counting iterated strong strips. 
\end{remark}

The following result also gives a criterion for the degree of $Z_f$ to be 1.

\begin{proposition}
Suppose $f$ has kinematical support and $\tF_f = \sum_\lambda c_\lambda s_\lambda \in H^*(\Gr(k,n))$.  Let $c = \gcd_{\mu \subset m^k}(c_{\mu^{+\ell}})$.  Then the degree of $Z_f$ divides $c$.  In particular, if $c = 1$, then $\deg(Z_f)= 1$.
\end{proposition}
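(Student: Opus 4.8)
The plan is to deduce everything directly from the formula for $[Y_f]$ established in Theorem~\ref{thm:main}. Since $f$ has kinematical support, for $Z \in U$ we are in case (2) of that theorem, which gives
$$
[Y_f] = \frac{1}{d}\,\tau_{k+m}([\Pi_f]) \in H^*(\Gr(k,k+m)),
$$
where $d = \deg(Z_f)$ (constant on $U$). By Theorem~\ref{thm:KLS} we have $[\Pi_f] = \tF_f = \sum_\lambda c_\lambda s_\lambda$, so by the definition of the truncation operator,
$$
[Y_f] = \frac{1}{d}\sum_{\mu \subseteq (m)^k} c_{\mu^{+\ell}}\, s_\mu .
$$

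Next I would invoke integrality of the Schubert expansion. The variety $Y_f$ is a closed (irreducible) subvariety of $\Gr(k,k+m)$, so $[Y_f]$ lies in $H^*(\Gr(k,k+m);\Z)$, and the Schubert classes $\{s_\mu \mid \mu \subseteq (m)^k\}$ form a $\Z$-basis of this group. Hence every coefficient of $[Y_f]$ in the Schubert basis is an integer. Comparing with the displayed formula, the coefficient of $s_\mu$ in $[Y_f]$ is $c_{\mu^{+\ell}}/d$, so $d \mid c_{\mu^{+\ell}}$ for every $\mu \subseteq (m)^k$; consequently $d$ divides $c = \gcd_{\mu \subseteq (m)^k} c_{\mu^{+\ell}}$. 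The final clause follows at once: if $c = 1$ then $d \mid 1$, i.e. $\deg(Z_f) = 1$.

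There is essentially no obstacle here once Theorem~\ref{thm:main} is in hand; the only point requiring any care is the passage to integer coefficients, which is secured by $H^*(\Gr(k,k+m);\Z)$ being free on the Schubert basis (one could even use effectivity of $[Y_f]$ to get nonnegativity, but only integrality is needed). It may be worth noting in passing that the same argument shows $\deg(Z_f)$ divides the gcd of \emph{any} subcollection of the $c_{\mu^{+\ell}}$, and in particular divides $c_{\ell^k}$ alone when $\dim \Pi_f = km$, which matches the degree formula of the preceding proposition.
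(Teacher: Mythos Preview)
Your argument is correct and is exactly the intended one: the paper states this proposition without proof, treating it as an immediate consequence of Theorem~\ref{thm:main} together with the integrality of the Schubert expansion of $[Y_f]$ in $H^*(\Gr(k,k+m);\Z)$. There is nothing to add.
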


For the next result, we will use the following version of Zariski's main theorem.

\begin{theorem}\label{thm:Zariski}
If $Y$ is a quasi-compact separated scheme and $f: X \to Y$ is a separated, quasi-finite, finitely presented morphism then there is a factorization into $X \to Z \to Y$, where the first map is an open immersion and the second one is finite.
\end{theorem}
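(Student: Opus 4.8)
The statement is Grothendieck's form of Zariski's main theorem, and the plan is to reproduce its standard proof in three stages: reduce to a Noetherian base, establish the core local algebraic statement, and glue. First I would reduce to the case that $Y = \Spec A$ with $A$ Noetherian. Since $Y$ is quasi-compact and separated it is quasi-compact and quasi-separated, so it can be written as a cofiltered inverse limit $Y = \varprojlim_\alpha Y_\alpha$ of schemes of finite type over $\Spec\Z$ (hence Noetherian) with affine transition maps. The morphism $f$ is finitely presented, in particular quasi-compact, so $X$ is quasi-compact as well; by the usual limit formalism $f$ descends to a finitely presented morphism $f_\alpha : X_\alpha \to Y_\alpha$ with $X \cong X_\alpha \times_{Y_\alpha} Y$, and after increasing $\alpha$ I may assume $f_\alpha$ is also separated and quasi-finite, both being finite-presentation conditions. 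A factorization $X_\alpha \to Z_\alpha \to Y_\alpha$ (open immersion followed by finite morphism) then base-changes along $Y \to Y_\alpha$ to the sought factorization of $f$, since open immersions and finite morphisms are stable under base change.

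Next comes the heart of the argument: the algebraic form of Zariski's main theorem. Covering $X$ by finitely many affine opens $\Spec B_i$, each $B_i$ is a finite-type $A$-algebra in which every prime is isolated in its fiber over $A$. The key lemma I would prove is: if $A \to B$ is of finite type and $P \subset B$ is a prime isolated in its fiber, then there is a finite $A$-subalgebra $B' \subseteq B$ and an element $g \in B' \setminus P$ with $B'_g \cong B_g$. The proof goes by induction on the number of generators of $B$ over $A$; the one-generator base case is the crux, where I must produce a monic polynomial relation for the generator in a neighborhood of $P$ — extracted from the finiteness of the fiber ring over the residue field near $P$, together with a Hensel/Nakayama-type lifting argument that promotes the relation from the fiber (or from the henselization of the local ring of $A$ at the image of $P$) back to $A$. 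Since $\Spec B_i$ is quasi-compact, taking the compositum of finitely many such subalgebras yields a single finite $A$-subalgebra $B_i' \subseteq B_i$ with $\Spec B_i \to \Spec B_i'$ an open immersion.

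Finally I would glue. Each $f^{-1}(U_i) = \Spec B_i$ embeds as an open subscheme of the finite $Y$-scheme $\Spec B_i'$; because $f$ is separated, the overlaps $\Spec B_i \cap \Spec B_j$ are affine, which is precisely what is needed to patch the associated coherent $\mathcal O_Y$-algebras into a single coherent $\mathcal O_Y$-algebra $\mathcal A$ with $X \hookrightarrow Z := \Spec_Y \mathcal A$ an open immersion and $Z \to Y$ finite. The main obstacles I anticipate are, first, the one-generator base case of the algebraic lemma — the classical technical core of the theorem, needing care with henselization and lifting — and, second, the compatibility verifications in the gluing step, where separatedness of $f$ and quasi-compactness of $X$ are both indispensable.
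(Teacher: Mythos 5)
The paper does not prove this statement at all: it is quoted as a known result (Grothendieck's form of Zariski's main theorem, as in EGA IV 8.12.6 or the Stacks Project) and is only \emph{used}, in the proof of the proposition comparing $\deg(Z_{f'})$ with $\deg(Z_f)$. So there is no internal argument to compare yours against; what you have written is an outline of the standard textbook proof, and its overall architecture (limit descent to a Noetherian base, the local algebraic form of ZMT, then globalization) is the right one.

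As a proof, though, two steps are genuinely incomplete. The first you flag yourself: the one-generator case of the key lemma (a prime $P$ isolated in its fiber admits a finite $A$-subalgebra $B'\subseteq B$ and $g\in B'\setminus P$ with $B'_g\cong B_g$) is the technical heart of the theorem, and ``a Hensel/Nakayama-type lifting argument'' is a placeholder, not an argument. The second is the gluing step, where what you describe would not work as stated: the finite subalgebras $B_i'\subseteq B_i$ are produced by non-canonical choices, so there is no reason they agree on overlaps, and affineness of the overlaps $\Spec B_i\cap \Spec B_j$ (from separatedness of $f$) does not by itself produce compatible algebra maps to patch the $\Spec B_i'$ into one finite $Y$-scheme containing $X$ as an open subscheme. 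The standard repair is different: reduce to $Y=\Spec A$ affine, form the integral closure $A'$ of $A$ in $\Gamma(X,\mathcal{O}_X)$ (equivalently the relative normalization of $Y$ in $X$), use the local lemma to show that $X\to \Spec A'$ is an open immersion, and then use quasi-compactness of $X$ to replace $A'$ by a finitely generated, hence finite, $A$-subalgebra that still contains $X$ as an open subscheme. (Minor points: ``coherent $\mathcal{O}_Y$-algebra'' should be ``quasi-coherent of finite type'' unless you have already passed to the Noetherian case, and in the limit argument quasi-finiteness of $f_\alpha$ for large $\alpha$ needs the standard descent result for quasi-finiteness rather than being a finite-presentation condition per se.) With those two steps filled in by the standard arguments, your outline becomes the usual proof of the cited theorem.
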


The following result roughly says that taking boundaries reduces the degree.  So the intuition is that lower-dimensional cells tend to have smaller degree.  Denote by $\partial \Pi_f$ the {\it boundary} of a positroid variety $\Pi_f$.  This is the union of all positroid varieties $\Pi_{f'} \subset \Pi_f$ of strictly lower dimension.  For more details on the closure partial order of positroid varieties see \cite{KLS,Pos}.

\begin{proposition}
Suppose $\Pi_{f'} \subset \partial \Pi_f$ and both $f'$ and $f$ have kinematical support.  Then $\deg(Z_{f'}) \leq \deg(Z_f)$.
\end{proposition}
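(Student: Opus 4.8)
The plan is to produce, over a generic point of $Y_{f'}$, at least $\deg(Z_{f'})$ distinct points in a finite degree-$\deg(Z_f)$ model of the map $Z_f$, and then to invoke the fact that such a model has at most $\deg(Z_f)$ points in each fibre; Zariski's main theorem (Theorem \ref{thm:Zariski}) is the tool that supplies the finite model. Throughout I work with a $Z$ generic in the open set underlying the definitions, shrinking it as needed. Since $f$ and $f'$ both have kinematical support, $Y_{f'}$ and $Y_f$ are irreducible with $\dim Y_{f'}=\dim\Pi_{f'}<\dim\Pi_f=\dim Y_f$, and $Y_{f'}\subseteq Y_f$ because $\Pi_{f'}\setminus E_Z\subseteq\Pi_f\setminus E_Z$ and images and closures are monotone.

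Let $(\Pi_f\setminus E_Z)^{\circ}$ be the dense open locus on which $Z_f$ is quasi-finite; it is the complement of the locus of points lying on a positive-dimensional fibre of $Z_f$, which is a proper closed subset because $\dim\Pi_f=\dim Y_f$. By Theorem \ref{thm:Zariski}, the restriction of $Z_f$ to $(\Pi_f\setminus E_Z)^{\circ}$ factors as an open immersion $(\Pi_f\setminus E_Z)^{\circ}\hookrightarrow\overline{Z}$ followed by a finite morphism $\pi\colon\overline{Z}\to Y_f$; taking $\overline{Z}$ to be the closure of the image, it is irreducible and $\deg(\pi)=\deg(Z_f)=:d$. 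The first key input is then that $\#\pi^{-1}(y)\le d$ for every $y\in Y_f$. To see this, replace $Y_f$ by its normalization and base-change $\overline{Z}$; this is a birational operation on $\overline{Z}$ so the degree is unchanged, and one is reduced to the elementary fact that a finite surjection of degree $d$ from an integral scheme onto a normal integral scheme has fibres of cardinality at most $d$ — as one sees by passing to a Galois closure of the function-field extension and bounding the number of double cosets.

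It remains to show that for a generic $y\in Y_{f'}$ the points of $Z_{f'}^{-1}(y)$ — which for generic $y$ are $\deg(Z_{f'})$ distinct, reduced (characteristic zero) points of $\oPi_{f'}$, since $\partial\Pi_{f'}\setminus E_Z$ maps to a proper closed subset of $Y_{f'}$ — all lie in $(\Pi_f\setminus E_Z)^{\circ}$, i.e. each is isolated in $Z_f^{-1}(y)$. Granting this, these $\deg(Z_{f'})$ points lie in $\overline{Z}$ and map to $y$ under $\pi$, so $\deg(Z_{f'})\le\#\pi^{-1}(y)\le d=\deg(Z_f)$. For the isolatedness, use that by Lemma \ref{lem:bundle} the map $Z_\Gr$ is a fibre bundle with fibre $\C^{k(n-k-m)}$ away from $E_Z$, so $Z_\Gr^{-1}(Y_{f'})$ is irreducible of dimension $\dim Y_{f'}+k(n-k-m)$ and $Z_f^{-1}(Y_{f'})=\Pi_f\cap Z_\Gr^{-1}(Y_{f'})$. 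A dimension count shows that $\Pi_{f'}\setminus E_Z$ has the expected dimension of a component of this intersection exactly when $\dim\Pi_f=km$; in that case, for $Z$ generic enough that the intersection is dimensionally proper, $\Pi_{f'}\setminus E_Z$ is an irreducible component of $Z_f^{-1}(Y_{f'})$ and every other component meets $\Pi_{f'}$ in a proper subvariety, so a generic point of $\oPi_{f'}$ lies on no other component and is isolated in its $Z_f$-fibre.

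The main obstacle is exactly this isolatedness when $\dim\Pi_f<km$: then $Z_f^{-1}(Y_{f'})$ is an excess intersection, and a priori $\Pi_{f'}$ could be absorbed into a larger irreducible component dominating $Y_{f'}$ with positive-dimensional fibres, so that the points of $Z_{f'}^{-1}(y)$ above a generic $y$ fail to be isolated in $Z_f^{-1}(y)$. I expect to resolve this by exploiting the genericity of $Z$ more carefully — establishing a relative Kleiman-type transversality statement for the intersection of $\Pi_f$ with the moving fibres $Z_\Gr^{-1}(Z_\Gr(X))$ as $X$ ranges over $\oPi_{f'}$ (equivalently, transversality of $T_X\Pi_f$ with $\ker(dZ_\Gr|_X)$ for generic $X\in\oPi_{f'}$), or by reducing to the case of a covering relation in the closure order, where $\oPi_{f'}$ lies in the smooth locus of $\Pi_f$; in the latter approach one must also check that kinematical support is inherited along a saturated chain of covering relations from $f'$ to $f$. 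This genericity/excess-intersection point, rather than the Zariski-main-theorem bookkeeping, is where the real work lies.
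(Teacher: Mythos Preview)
Your argument is the paper's argument: restrict $Z_f$ to its quasi-finite locus $V\subset\Pi_f\setminus E_Z$ (open by upper semicontinuity of fibre dimension on the source), apply Zariski's main theorem to factor $Z_f|_V$ through a finite degree-$d_f$ map to $Y_f$, and bound every fibre of $Z_f|_V$ by $d_f$. The paper then concludes in one line --- ``In particular, the typical fiber of $Z_{f'}$ has $\leq d_f$ points'' --- without separately verifying that a generic point of $\Pi_{f'}\setminus E_Z$ actually lies in $V$. That is precisely the step you isolate as ``the main obstacle,'' and the paper supplies no transversality or excess-intersection analysis for it; it simply records that $V\cap(\Pi_{f'}\setminus E_Z)$ is open in $\Pi_{f'}\setminus E_Z$, notes that $Z_{f'}=Z_f|_{\Pi_{f'}\setminus E_Z}$, and passes directly to the conclusion.

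So your proposal already matches the published proof in structure and in level of detail; your normalization step justifying the $\leq d$ fibre bound for a finite map is in fact more careful than the paper's bare assertion. The extra machinery you contemplate --- relative Kleiman transversality along $\oPi_{f'}$, reduction to covering relations, inheritance of kinematical support along saturated chains --- goes well beyond anything the paper provides. You should regard the point you flag not as a gap in your argument relative to the paper's, but as a subtlety the paper leaves implicit.
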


\begin{proof}
By applying Lemma \ref{lem:main} to both $W = \Pi_f$ and $W = \Pi_{f'}$, we see that we may assume that we are considering $Z \in \Mat(n,k+m)$ such that $\Pi_{f} \setminus E_Z$ is dense in $\Pi_f$ and $\Pi_{f'} \setminus E_Z$ is dense in $\Pi_{f'}$.    We may suppose that $Z_f$ has degree $d_f$ and $Z_{f'}$ has degree $d_{f'}$ where both maps are dimension-preserving.

Let $V \subset (\Pi_f \setminus E_Z)$ consist of points $X \in \Pi_f \setminus E_Z$ where $Z_f^{-1}(Z_f(X))$ is finite.  Since fiber dimension is upper semicontinuous on the source, the set $V$ is open in $\Pi_f\setminus E_Z$.  But $Z_{f'} = Z_f|_{\Pi_{f'} \setminus E_Z}$ so $V \cap (\Pi_{f'} \setminus E_Z)$ is open in $\Pi_{f'} \setminus E_Z$ as well. 

By assumption $Z_f|_V$ is quasi-finite, so by Theorem \ref{thm:Zariski}, we have a factorization of $Z_f|_V$ as $V \to S \to Y_f$, where $V \to S$ is an open immersion and $S \to Y_f$ is finite.  Clearly, $S \to Y_f$ has degree $d_f$.  It follows that the typical fiber of $Z_f|_V$ has exactly $d_f$ points, and every fiber of $Z_f|_V$ has $\leq d_f$ points.  In particular, the typical fiber of $Z_{f'}$ has $\leq d_f$ points.  Thus $d_{f'} \leq d_f$.
\end{proof}

A similar argument gives

\begin{proposition}
Suppose $\Pi_{f'} \subset \partial \Pi_f$ and $f$ does not have kinematical support.  Then $f'$ does not have kinematical support.
\end{proposition}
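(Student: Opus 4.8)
The plan is to prove the contrapositive: if $f'$ has kinematical support, then so does $f$. As in the proof of the previous proposition, I would begin by invoking Lemma~\ref{lem:main} for $W=\Pi_f$ and for $W=\Pi_{f'}$ (and all relevant Schubert varieties $Y_I$), fixing a single $Z$ in the intersection of the resulting open sets; then $\Pi_f\setminus E_Z$ is dense in $\Pi_f$, $\Pi_{f'}\setminus E_Z$ is dense in $\Pi_{f'}$ and is contained in $\Pi_f\setminus E_Z$, so $Z_{f'}=Z_f|_{\Pi_{f'}\setminus E_Z}$ and $Y_{f'}\subseteq Y_f$. Recall also (Theorem~\ref{thm:main}, Theorem~\ref{thm:KLS}, and the Corollary following Theorem~\ref{thm:main}) that kinematical support of $f$ is equivalent to $\tau_{k+m}(\tF_f)\neq 0$, equivalently to the occurrence in $\tF_f$ of some Schur function $s_\lambda$ with $\ell^k\subseteq\lambda\subseteq(n-k)^k$; by \eqref{eq:dual} this says geometrically that $\Pi_f$ meets a Schubert variety $X^Z_I$ of complementary dimension for some $I\subseteq[k+m]$ (these being exactly the Schubert varieties of Lemma~\ref{lem:inverse}, since $I\subseteq[k+m]$ forces $\lambda(I)\subseteq m^k$).

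Next I would extract geometric content from the hypothesis on $f'$. Since $f'$ has kinematical support, $\tau_{k+m}([\Pi_{f'}])\neq 0$, so by Proposition~\ref{prop:main} (condition (3) of Lemma~\ref{lem:main}) there is a Schubert variety $Y_{I'}\subseteq\Gr(k,k+m)$ with $I'\subseteq[k+m]$ and $\dim\Pi_{f'}+\dim Y_{I'}=km$ such that $\Pi_{f'}$ meets $X^Z_{I'}=\overline{Z_\Gr^{-1}(Y_{I'})}$ transversally, with every intersection point lying in $\Pi_{f'}\setminus E_Z$. As $\Pi_{f'}\setminus E_Z\subseteq\Pi_f\setminus E_Z$ and $X^Z_{I'}\setminus E_Z=Z_\Gr^{-1}(Y_{I'})$, these points also lie in $\Pi_f\cap X^Z_{I'}$ and, after applying $Z_\Gr$, produce a point of $Y_f\cap Y_{I'}$. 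Since $\dim\Pi_f+\dim X^Z_{I'}=k(n-k)+(\dim\Pi_f-\dim\Pi_{f'})>k(n-k)$, a Kleiman-type dimension count (the version of Lemma~\ref{lem:main} without its transversality clause, which uses the $GL(k+m)$-action exactly as in the proof of Lemma~\ref{lem:main}) shows that for our generic $Z$ the intersection $\Pi_f\cap X^Z_{I'}$ is nonempty and equidimensional of dimension $\dim\Pi_f-\dim\Pi_{f'}$; fibering it over $Y_f\cap Y_{I'}$ by $Z_\Gr$ and using that these fibres are big cells of subGrassmannians (Lemma~\ref{lem:bundle}), one bounds $\dim(Y_f\cap Y_{I'})$ from below and deduces $\dim Y_f\geq\dim\Pi_{f'}$.

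The main obstacle is to upgrade this last inequality to the full statement $\dim Y_f=\dim\Pi_f$, equivalently to show that $\Pi_f$ meets a \emph{smaller} Schubert variety $X^Z_{I''}$ with $I''\subseteq[k+m]$, of dimension complementary to $\Pi_f$ rather than to $\Pi_{f'}$; combinatorially, given $[s_\sigma]\tF_{f'}\neq 0$ with $\ell^k\subseteq\sigma$, one must produce $\rho\subseteq\sigma$ with $\ell^k\subseteq\rho$, $|\rho|=\codim\Pi_f$, and $[s_\rho]\tF_f\neq 0$, i.e., a compatibility asserting that the Schur support of $\tF_f$ "sits below" that of $\tF_{f'}$ relative to the box $\ell^k$ whenever $\Pi_{f'}\subseteq\Pi_f$. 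I would attack this by first reducing, via the gradedness of the positroid closure order \cite{KLS,Pos}, to the case where $\Pi_{f'}$ has codimension one in $\Pi_f$; then $Y_{f'}=Y_f$ (both irreducible of the same dimension, one inside the other), $Z_f$ has one-dimensional generic fibre, and the required statement becomes the assertion that the divisor $\Pi_{f'}\setminus E_Z\subset\Pi_f\setminus E_Z$ cannot map generically finitely onto $Y_f$ while the ambient $\Pi_f$ has one-dimensional $Z_\Gr$-fibres. This I would settle by the same Zariski's main theorem argument as in the previous proposition (Theorem~\ref{thm:Zariski}), applied to the restriction of $Z_f$ to the quasi-finite locus inside $\Pi_{f'}\setminus E_Z$, together with an analysis of how a generic $Z_f$-fibre — a curve in $\Pi_f$ — meets the boundary divisor $\Pi_{f'}$; making this intersection analysis precise, using the structure of the positroid stratification of $\Pi_f$ along $\partial\Pi_f$, is the delicate point of the proof.
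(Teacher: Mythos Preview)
The paper offers no separate argument for this proposition: it reads, in full, ``A similar argument gives,'' pointing back to the fiber--dimension/Zariski's Main Theorem proof of the preceding proposition. Whatever the paper has in mind, it is meant to be the same short semicontinuity set-up (choose a generic $Z$ via Lemma~\ref{lem:main}, look at the quasi-finite locus $V$ of $Z_f$, use upper semicontinuity of fibre dimension on the source), not a Schubert-calculus computation followed by a codimension-one reduction.

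Your proposal is both more elaborate than that template and, by your own admission, incomplete. Two concrete points:

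\emph{The middle portion is unnecessary.} The chain of Schubert intersections with $X^Z_{I'}$ and the fibering over $Y_f\cap Y_{I'}$ are used only to conclude $\dim Y_f\ge\dim\Pi_{f'}$. But this is immediate from $Y_{f'}\subseteq Y_f$ together with the hypothesis $\dim Y_{f'}=\dim\Pi_{f'}$; no intersection theory is needed.

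\emph{The final step is a genuine gap.} After your codimension-one reduction you reach the configuration: $Z_f:\Pi_f\setminus E_Z\to Y_f$ has one-dimensional generic fibre, $\Pi_{f'}\subset\Pi_f$ is a divisor with $Y_{f'}=Y_f$, and $Z_{f'}=Z_f|_{\Pi_{f'}\setminus E_Z}$ is generically finite. You say the contradiction will come from analysing how a generic $Z_f$-fibre curve meets $\Pi_{f'}$, but you do not carry this out, and it is not a formality. For an arbitrary dominant morphism $\phi:A\to B$ with one-dimensional fibres and a divisor $A'\subset A$, the restriction $\phi|_{A'}$ can perfectly well be generically finite (project $\mathbb C^2\to\mathbb C$ and take $A'$ a transverse line). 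So any argument here must invoke something specific to positroid varieties or to the genericity of $Z$, and your sketch supplies neither. Applying Theorem~\ref{thm:Zariski} to the quasi-finite locus of $Z_{f'}$, as you suggest, controls cardinalities of $Z_{f'}$-fibres, not dimensions of the ambient $Z_f$-fibres, and does not by itself yield a contradiction.

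In summary, your route departs substantially from the paper's intended one-line adaptation of the previous proof, and the step you flag as ``delicate'' is in fact the entire content of the proposition; as written, the proposal is not a proof.
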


\subsection{Application to the amplituhedron}
The totally nonnegative part $\Gr(k,n)_{\geq 0}$ \cite{Pos} of the real Grassmannian is the locus of points $X \in \Gr(k,n)(\R)$ representable with nonnegative (real) Pl\"ucker coordinates $\Delta_I(X)$.  The totally nonnegative part of $\Pi_f$ is defined to be $(\Pi_f)_{\geq 0} := \Pi_f \cap \Gr(k,n)_{\geq 0}$.

We say that $Z$ is {\it positive} if all $(k+m) \times (k+m)$ minors are positive, and all entries are real.  If $Z$ is positive and general (that is, $Z$ belongs to the Zariski-dense set $U$ of Theorem \ref{thm:main}), we define the TNN part of $Y_f$ to be
$$
(Y_f)_{\geq 0} := Z_\Gr((\Pi_f)_{\geq 0}).
$$
As shown in \cite{AT}, it is not difficult to see that in this case $(\Pi_f)_{\geq 0}$ does not intersect $E_Z$.

\begin{proposition}
Suppose $f$ has kinematical support.  Then $\dim_\R((Y_f)_{\geq 0}) = \dim(Y_f)$.  %Furthermore, $Z_f$ is generically $d_f$ to $1$ when restricted to $(\Pi_f)_{\geq 0}$.
\end{proposition}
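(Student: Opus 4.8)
The plan is to reduce the real-dimension statement to the complex-dimension statement already established in Theorem \ref{thm:main}, together with a real version of genericity. Since $f$ has kinematical support, we know $\dim(Y_f) = \dim(\Pi_f)$ as complex varieties, and the map $Z_f: (\Pi_f \setminus E_Z) \to Y_f$ is dominant and generically finite. The key point is that $(\Pi_f)_{\geq 0}$ is a semialgebraic set of full real dimension inside $\Pi_f$: indeed $(\Pi_f)_{\geq 0}$ contains the open positroid cell $(\oPi_f)_{>0}$, which is diffeomorphic to an open ball of real dimension $\dim_\C(\Pi_f)$ (this is Postnikov's theorem, \cite{Pos}), and hence is Zariski-dense in $\Pi_f$. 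So the complex variety $\Pi_f$ is the Zariski closure of the real semialgebraic set $(\Pi_f)_{\geq 0}$.

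First I would observe that, because $Z$ is chosen positive and general, $(\Pi_f)_{\geq 0}$ does not meet $E_Z$ (this is noted just before the statement, following \cite{AT}), so $Z_\Gr$ is regular on all of $(\Pi_f)_{\geq 0}$ and $(Y_f)_{\geq 0} = Z_\Gr((\Pi_f)_{\geq 0})$ is a well-defined semialgebraic subset of $\Gr(k,k+m)(\R)$. Next, since $Z_\Gr$ is a morphism and $(\oPi_f)_{>0}$ is Zariski-dense in $\Pi_f$, the image $Z_\Gr((\oPi_f)_{>0})$ is Zariski-dense in $Y_f = \overline{Z_\Gr(\Pi_f \setminus E_Z)}$; in particular its Zariski closure is all of $Y_f$, which has complex dimension $\dim(\Pi_f)$.

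The main step is then the following: a semialgebraic subset of a real algebraic variety whose Zariski closure is an irreducible complex variety $V$ of complex dimension $e$ must itself have real dimension $e$, provided it contains a point where it is locally a real manifold of dimension $e$. To get such a point, I would use that $Z_f$ restricted to a suitable open dense subset is a submersion (generic smoothness, as used throughout the proof of Lemma \ref{lem:main}): pick $X \in (\oPi_f)_{>0}$ in the locus where $Z_f$ is a submersion onto $Y_f^{\sm}$ near $Z_f(X)$ — such $X$ exists because the submersive locus is Zariski-open and nonempty in $\Pi_f$, hence meets the Zariski-dense set $(\oPi_f)_{>0}$. Near this $X$, the real points $(\oPi_f)_{>0}$ form a real manifold of dimension $\dim_\C(\Pi_f)$, and $Z_f$ is a real submersion onto a real manifold chart of $Y_f(\R)$ of dimension $\dim_\C(Y_f) = \dim_\C(\Pi_f)$; therefore the image contains an open subset of $Y_f(\R)$ of that real dimension. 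Since $(Y_f)_{\geq 0}$ contains this image, $\dim_\R((Y_f)_{\geq 0}) \geq \dim(Y_f)$. The reverse inequality $\dim_\R((Y_f)_{\geq 0}) \leq \dim_\R(Y_f(\R)) = \dim_\C(Y_f)$ is immediate since $(Y_f)_{\geq 0} \subseteq Y_f(\R)$ and a complex variety of complex dimension $e$ has real points of real dimension at most $e$. Combining the two inequalities gives the claim.

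The step I expect to be the main obstacle is verifying that the submersive locus of $Z_f$ genuinely meets $(\oPi_f)_{>0}$ at a point through which the image is open of the right real dimension — i.e., ruling out the possibility that all of $(\oPi_f)_{>0}$ maps into the "bad" locus (the non-smooth locus of $Y_f$, or the locus where $Z_f$ drops rank). This is handled by the Zariski-density of $(\oPi_f)_{>0}$ in $\Pi_f$: any Zariski-open dense subset of $\Pi_f$ must meet $(\oPi_f)_{>0}$. The only subtlety is that one wants the good locus to be not just dense but to genuinely witness the submersion over a real chart; this follows because generic smoothness is a statement about a Zariski-open set, and the real points of a Zariski-open dense subset of an irreducible complex variety that already contains a full-dimensional real manifold are themselves a full-dimensional real manifold away from a lower-dimensional set.
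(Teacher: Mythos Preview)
Your proof is correct and shares the paper's core strategy: use Postnikov's result that $(\Pi_f)_{\geq 0}$ has full real dimension in $\Pi_f$, deduce that it is Zariski-dense in the irreducible variety $\Pi_f$, and push this Zariski-density forward along $Z_\Gr$ to conclude that $(Y_f)_{\geq 0}$ is Zariski-dense in $Y_f$.

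Where you diverge is in the final step. The paper simply asserts that Zariski-density of the semialgebraic set $(Y_f)_{\geq 0}$ in the irreducible variety $Y_f$ forces $\dim_\R((Y_f)_{\geq 0}) = \dim_\C(Y_f)$; this is the standard fact that a semialgebraic set of real dimension $d$ has complex Zariski closure of complex dimension $d$. You instead supply a more hands-on argument via generic smoothness: locate a point of $(\oPi_f)_{>0}$ in the submersive locus of $Z_f$ and produce an honest real chart in the image. Your route is a valid and self-contained alternative, but it is more work than needed; once you have Zariski-density of $(Y_f)_{\geq 0}$ in $Y_f$ (which you already established), the dimension equality follows immediately from the general real/complex dimension fact without any appeal to submersions or the smooth locus of $Y_f$.
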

\begin{proof}
It is known \cite{Pos} that $(\Pi_f)_{\geq 0}$ has real dimension equal to the complex dimension of $\Pi_f$.  Since $\Pi_f$ is irreducible (see \cite{KLS}), it follows that $(\Pi_f)_{\geq 0}$ is Zariski-dense in $\Pi_f$.  It follows that $(Y_f)_{\geq 0}$ is Zariski-dense in $Y_f$ and thus $\dim_\R((Y_f)_{\geq 0}) = \dim(Y_f)$.
%When $f$ has kinematical support, the map $Z_f$ has finite fibers of size $d$ over a Zariski-open subset of $\Pi_f \setminus E$.  It follows that $Z_f$ is generically $d_f$ to $1$ when restricted to $(\Pi_f)_{\geq 0}$.  It also follows that $\dim_\R((Y_f)_{\geq 0}) = \dim(Y_f)$.
\end{proof}

Suppose $f$ has kinematical support and $Z_f$ has degree $d_f$, and assume that $Z$ is positive.  While the map $Z_f$ has degree $d_f$, it is not the case that the map $Z_f$ restricted to $(\Pi_f)_{\geq 0}$ is generically $d_f$ to $1$.  It is an interesting problem to understand the geometry of the map $Z_f$ when restricted to the real points $\Pi_f(\R)$ or totally nonnegative points $(\Pi_f)_{\geq 0}$.

\section{Some further directions}\label{sec:other}
\subsection{Monomial description of truncated affine Stanley symmetric functions}
Since the truncated affine Stanley symmetric function $\tau_{k+m}(\tF_f)$ is Schur-positive, it is also monomial-positive.  However, it is not clear which monomials in the definition of $\tF_f$ actually contribute to the truncation.
\begin{problem}
Find a direct combinatorial description of the monomial expansion of $\tau_{k+m}(\tF_f)$.
\end{problem}

Presumably this involves selecting {\it some} of the cyclically decreasing factorizations of $f$ to contribute to $\tau_{k+m}(\tF_f)$.

\subsection{Non-generic maps $Z$}
Our results only apply to generic $Z \in \oMat(n,k+m)$.  However, from the point of view of convex geometry, it is interesting to consider non-generic maps.  Specifically, when $k = 1$, the totally nonnegative Grassmannian $\Gr(1,n)_{\geq 0}$ is a simplex embedded in projective space, and thus {\it any} polytope $P$ can be expressed as the image $Z_\Gr(\Gr(1,n)_{\geq 0})$ for some choice of $Z$.  When $Z$ is positive, $P$ will be a cyclic polytope (see \cite{Stu} for a related result).

\begin{problem}
Compute the cohomology class $[\overline{Z_\Gr(\Pi_f \setminus E_Z)}] \in H^*(\Gr(k,k+m))$ for all $Z \in \oMat(n,k+m)$.
\end{problem}

A related, possibly easier, problem is the following. 
\begin{problem}
What is the cohomology class of $X_I(F_\bullet) \cap X_J(G_\bullet)$ when the two Schubert varieties are not in generic position?
\end{problem}

\end{document}